\newtheorem{theorem}{Theorem}[section]
\newtheorem{lemma}[theorem]{Lemma}
\theoremstyle{definition}
\newtheorem{remark}{Remark}
\newcommand{\Card}{\operatorname{Card}}
\begin{document}

\title{Stability of synchronization under stochastic perturbations in leaky integrate and fire neural networks of finite size}

\author{Pierre Guiraud$^1$ \and Etienne Tanr\'e$^2$}
\date{\today}

\footnotetext[1]{Centro de Investigaci\'on y Modelamiento de Fen\'omenos Aleatorios -- Valpara\'iso, Facultad de Ingenier\'ia, Universidad de Valpara\'{\i}so -- Valpara\'{\i}so, Chile \texttt{pierre.guiraud@uv.cl}} 
\footnotetext[2]{Universit\'e C\^ote d'Azur, Inria, France.
             2004 route des Lucioles, BP93, 06902 Sophia-Antipolis Cedex, France 
             \texttt{Etienne.Tanre@inria.fr}    }

\maketitle

\begin{abstract}
	We study the synchronization of fully-connected and totally excitatory integrate and fire neural networks in presence of Gaussian white noises. 
	Using a large deviation principle, we prove the stability of the synchronized state under stochastic perturbations.  
	Then, we give a lower bound on the probability of synchronization for networks which are not initially synchronized. This bound shows the robustness of the emergence of synchronization in presence of small stochastic perturbations.
\end{abstract}
\noindent \textbf{2010 AMS subject classifications:} Primary: 92B25, 92B20; Secondary: 60F10, 60J75.

\noindent\textbf{Key words and phrases:} Network of LIF neurons, Synchronization, Stochastic stability, Interacting Ornstein Uhlenbeck processes, Excitatory synapses.

\section{Introduction}

Neurons are the cells of the nervous system which are able to generate 
transmissible electrical signals called action potentials, or spikes for short, 
and to encode information in the sequences of time intervals separating them 
(inter-spikes intervals (ISI)). 
The information encoded in spikes sequences is 
transmitted and processed in the network formed by the neurons by means of 
synaptic interactions triggered by the spikes. 
These interactions may facilitate 
(excitatory interactions) or prevent (inhibitory interactions) the emission of
spikes in postsynaptic neurons. 
Among the collective behaviors emerging 
from this coupling, the synchronization is one of those that has attracted much 
attention in experimental and theoretical works:
a network is fully synchronized when all its neurons spike simultaneously.

Mathematical studies of synchronization have been carried out first in deterministic neural models. 
For instance, in \cite{mirollo1990synchronization}, the authors 
prove that in fully-connected and totally excitatory leaky integrate and fire neural network models the synchronization 
occurs for almost all initial state. When weak interactions are assumed, a large class of models can be reduced to canonical 
systems of phase coupled oscillators \cite{hoppensteadt1997weakly, izhikevich1999class}. This formalism allows to study 
the existence and the stability of synchronized solutions in weakly coupled general networks \cite{izhikevich1999weakly}, 
including networks containing inhibitory synapses and considering synaptic delays \cite{van1996partial,van1994inhibition}. 
Synchronized solutions and more generally phase-locked solutions in integrate and fire networks with delays have also been 
analyzed in the case of strong or arbitrary intensity coupling, see for instance \cite{bressloff1999travelling, bressloff2000dynamics}.

Successive \textit{in vivo} recordings of the membrane potential of a neuron show the existence of a large variability in its ISI. 
Although this variability could be the result of the complex encoding of information
in the neural network, it nevertheless suggests the presence of noise in the neural activity. As a matter of fact, the sources of noise are multiple. 
We can mention thermal noise, channel noise, synaptic noise, and the noise resulting of a background neural activity. 
Noise is commonly modeled
by a Gaussian stochastic process:
an additive white noise is considered in
the dynamics of the membrane potential. 
Gaussian noise models a weak interaction between the neurons of a sub-network 
and those of the 
sequel of the network. This can be derived rigorously by assuming balanced excitation and inhibition outside of the sub-network, and a Poisson statistics for the times of the presynaptic spikes \cite{gerstner2002spiking}.

The mathematical analysis and the problem of efficient numerical simulations of neural networks with Gaussian noise can be tackled when considering simplified 
models such as (leaky) integrate and fire models. 
To this class of models it is possible to associate a discrete time Markov chain containing all the informations to study the 
spiking times \cite{touboul_faugeras}. 
This approach allows to deduce qualitative properties of the network and also to build efficient algorithm to simulate exactly the ISI.
Nevertheless, the dimension of the Markov chain depends on the characteristics of the network and 
in most cases a good knowledge of the transition distribution of the chain is needed.

For instance, for Perfect Integrate and Fire (PIF) or Leaky Integrate and Fire (LIF) network models
with inhibitory interactions, the {\it countdown sequence}, which after any spiking time  
gives for each neuron the remaining time until it emits a spike supposing no interaction meanwhile,
is a Markov chain. 
For networks with excitatory interactions, to keep the Markov property of the countdown process
it is necessary to change the interaction rules \cite{turova1994synchronization} (taking into account the last presynaptic spike 
and forgetting the previous ones). Instead of changing the interaction rules, another way to have the Markov property is to consider a  chain of higher dimension composed by the pair of the countdown process and the membrane potential of each neuron at the spiking times \cite{touboul_faugeras}.
Unfortunately, there is no explicit expressions for the transition probabilities of this chain, even  for simple models as LIF networks. 
The inefficiency of the Markov chain approach makes difficult the study and simulation of neural networks with excitatory synapses. 

The mean-field limit of such networks when the number of neurons goes to infinity has also been studied \cite{DIRT,DIRT2}.
It exhibits another kind of synchronization, defined by the fact that the number of neurons which spike simultaneously is a macroscopic 
proportion of the full network (i.e \(\eta N\), where 
\(N\) is the number of neurons and \(0<\eta \leq 1\)).
In this context,  the size of the interactions has to decrease  with the number of neurons: they are of order \(1/N\).

In the present paper, we study the complete synchronization for finite networks, which can be considered as  noisy versions of the model of \cite{mirollo1990synchronization}, that is, fully-connected and totally excitatory integrate and fire neural network with noise. 
Using a large deviation principle, we show that, 
if the network is synchronized, it remains synchronized with large probability, provided the random perturbations are small enough.
In a second time, we study the emergence of synchronization in a network which is not initially synchronized.
The proof developed in \cite{mirollo1990synchronization} cannot be easily adapted, since it relies on the preservation of an order  which is destroyed in presence of noise.   
Therefore, to obtain an estimation of the probability that the synchronization occurs before the $n$-th firing of the
network, we propose an alternative strategy to those of \cite{mirollo1990synchronization, turova1994synchronization},  
which use some arguments developed for deterministic networks in \cite{Catsigeras_Guiraud}. 
This estimation is obtained supposing a specific relation 
between intensity of interaction and the number of neurons.
These results show that both the emergence of synchronization 
proved in a deterministic setting 
and the synchronized state itself are robust under small stochastic 
perturbations.

The paper is organized as follows. In Section~\ref{MODELTH}, we introduce the integrate and fire neural network with Gaussian noise.
In Section~\ref{sec:mainresult}, we give our main results on synchronization.
In Section~\ref{sec:numeric}, we present numerical simulations. All the proofs 
are contained in Section~\ref{PROOFS} and in 
the Appendix which precises 
the large deviation principle for  Ornstein-Uhlenbeck process.

\section{Model}\label{MODELTH}
We consider a leaky integrate and fire neural network with Brownian noise. 
We suppose the network contains \(N\) neurons which are labelled by an index in the set $I:=\llbracket 1,N\rrbracket$. 
For each $i\in I$, the quantity \(V^{i,\varepsilon}(t)\) represents the membrane 
potential of the neuron \(i\) at time $t\geq 0$, in presence of a noise whose intensity
is parametrized by $\varepsilon\geq 0$. 

As usual in integrate and fire neural networks, the dynamics of the model considered here has two regimes: 
a sub-threshold regime and a firing regime. We say that the network is in the sub-threshold regime if the 
potential of all the neurons is smaller than a fixed threshold potential $\theta$. 
When the potential of one neuron in the network reaches the threshold, it emits a spike and we say that the network 
is in the firing regime. Then, the potential of the spiking neurons is reset to a potential $V_r$ (smaller than \(\theta\)) and the neurons that do not spike, but are connected with at least one spiking neuron, suffer a jump in their potential depending on the synaptic weights. 

Now we detail the two regimes of the network. For all $n\geq 0$, we denote by $\tau_n^{\varepsilon}$ the $n$-th instant when the network is 
in the firing regime. More precisely, by convention $\tau_0^{\varepsilon}=0$ and for all $n\geq 0$ the instant $\tau_{n+1}^{\varepsilon}$ is the first instant 
after $\tau_{n}^{\varepsilon}$ when a spike is emitted in the network.
We suppose that the sequence $(\tau_n^\varepsilon)_{n\in\mathbb{N}}$ obeys the following induction:
\begin{equation}\label{eq:definitiontaun}
\tau^{\varepsilon}_{n+1}:=\min_{i\in I}\inf\left\{t>\tau_n^{\varepsilon}\ :\  \tilde{V}^{i,\varepsilon}_n(t)\geq\theta\right\}\quad\forall n\geq 0,
\end{equation}
where, $\tilde{V}^{i,\varepsilon}_n$ is the solution of the stochastic
differential equation 
\begin{equation}\label{eq:evolutiondevtilde}
\tilde{V}^{i,\varepsilon}_n(t)=V^{i,\varepsilon}
(\tau_n^\varepsilon) - \gamma\int_{\tau_n^\varepsilon}^{t}\left(\tilde{V}^{i,\varepsilon}_n(s)-\beta\right)ds+\sqrt{\varepsilon}\, (W_{t}^{i}-W_{\tau_n^{\varepsilon}}^{i}) \quad\forall t\geq\tau_n^{\varepsilon},
\end{equation}
where $\gamma>0$, $\beta>\theta$ and ${(W_t^1)}_{t\geq 0}, {(W_t^2)}_{t\geq 0},\dots {(W_t^N)}_{t\geq 0}$ are independent Brownian motions. The constant $\gamma$ is related to the resistance $R$ 
and the capacity $C$ of the neural membrane and is equal to $1/RC$. The parameter $\beta$ takes into account
a constant external current $I_{ext}$ and satisfies $\beta=RI_{ext}$. Note that for $n\geq 1$, the quantity $V^{i,\varepsilon}(\tau_n^\varepsilon)$ is the potential of the neuron $i$ just after the $n$-th firing regime (we will describe it soon). At time $\tau_0^\varepsilon=0$, we suppose that the potentials $V^{i,\varepsilon}(0)$ are included in $[\alpha,\theta)$, where $\alpha<0$.

It follows that for each \(n\in\mathbb{N}\) and \(i\in I\), the process \((\tilde{V}^{i,\varepsilon}_n(t),t\geq \tau_n^\varepsilon)\) is an Ornstein-Uhlenbeck process. So, it can also be written as 
\begin{equation}\label{SOLSUBTHPOT}
\tilde{V}^{i,\varepsilon}_n(t)=\phi_{t-\tau_n^{\varepsilon}}({V}^{i,\varepsilon}(\tau_n^{\varepsilon}))+
\sqrt{\varepsilon}e^{-\gamma t}\int_{\tau_n^{\varepsilon}}^{t}e^{\gamma s}\, dW_{s}^{i} \quad\forall t\geq\tau_n^{\varepsilon},
\end{equation} 
where
\[
\phi_{t}(x)=(x-\beta)e^{-\gamma t} +\beta.
\]
In the sub-threshold regime, that is for $t\in(\tau_{n}^{\varepsilon},\tau_{n+1}^{\varepsilon})$ with $n\geq 0$, we suppose that the membrane potential of each neuron satisfies 
\begin{equation}\label{POTSUBTH}
V^{i,\varepsilon}(t)=\tilde{V}^{i,\varepsilon}_n(t). 
\end{equation}

Now we describe the firing regime. 
We denote by $V^{i,\varepsilon}(\tau_n^{\varepsilon}-)$   the value of the potential of neuron $i$, in the sub-threshold regime just before the instant $\tau_n^{\varepsilon}$. It is a shorthand notation for:
\[
V^{i,\varepsilon}(\tau_n^{\varepsilon}-):=\lim_{t\nearrow\tau_n^{\varepsilon}}V^{i,\varepsilon}(t)
=\phi_{\tau_n^{\varepsilon}-\tau_{n-1}^{\varepsilon}}(V^{i,\varepsilon}(\tau_{n-1}^{\varepsilon}))+
\sqrt{\varepsilon}e^{-\gamma\tau_n^{\varepsilon}}\int_{\tau_{n-1}^{\varepsilon}}^{\tau_n^{\varepsilon}}e^{\gamma s}\, dW_{s}^{i}.
\]
We now define step by step the set \(J(n)\)  of neurons which spike at time \(\tau_n^{\varepsilon}\).
The first component of \(J(n)\) is
\[
J^0(n)=\{i\in I,\quad V^{i,\varepsilon}(\tau_n^{\varepsilon}-)\geq \theta\}.
\]
The set \(J^0(n)\) corresponds to the neurons which spike spontaneously (i.e. without any interaction with the other neurons 
in the network). In practice, in our model, the set \(J^0(n)\) is almost surely a singleton. We then define
\[
J^{1}(n)=\{i\in I\setminus J^{0}(n),\quad V^{i,\varepsilon}(\tau_n^{\varepsilon}-)+\sum_{j\in J^{0}(n)}H_{ji}\geq \theta\},
\]
where the constants $H_{ji}$ are the synaptic weights. They represent the effect on the neuron $i$ of a spike emitted by the neuron $j$. 
We assume the network fully connected and 
purely excitatory
\begin{equation}
m:= \min_{i,j\in I}H_{ji} > 0.
\end{equation}
Note that a neuron $i$  of \( J^1(n)\) has a membrane potential smaller than \(\theta\) at time \(\tau_n^{\varepsilon}-\) but
larger than \(\theta\) after receiving at time \(\tau_n^{\varepsilon}\) the kicks \(H_{ji}\) of the spiking neurons \(j\) of \(J^0(n)\). The set $J^1(n)$
is therefore the set of the neurons which spike at time $\tau_n^\varepsilon$  by interaction with the neurons of \(J^0(n)\).
In the same way, 
we define by induction the sets
\[
J^{p+1}(n)=\{i\in I\setminus \cup_{0\leq q\leq p}J^{q}(n),\quad V^{i,\varepsilon}(\tau_n^{\varepsilon}-)+\sum_{j\in\cup_{0\leq q\leq p}J^{q}(n)}H_{ji}\geq \theta\}.
\]
Thus,
\[
J(n):=\cup_{p\in\mathbb{N}}J^{p}(n)
\]
is the set of the labels of the neurons which emit a spike at time \(\tau_n^\varepsilon\). Note that this union is finite, since 
our definition implies that \(J^{p+1}(n) \subset (\cup_{0\leq q\leq p}J^{q}(n))^\mathrm{C}\). Therefore, a neuron can not spikes twice (or more)
at time \(\tau_n^{\varepsilon}\). Another way to obtain this property could be to introduce a refractory period in the previous model,
but it would add technical difficulties. Nevertheless, the model considered in this paper can be interpreted as 
the limit of the similar model with a refractory period, when the refractory period goes to zero.

Now we can define the state of the network at time \(\tau_n\), at the end of the
firing regime, by 
\begin{equation}\label{FIRE!}
V^{i,\varepsilon}(\tau_n^\varepsilon)=
\begin{cases}
\begin{array}{ll}
V_r & \mbox{ if } i\in J(n)\\
V^{i,\varepsilon}(\tau_n^{\varepsilon}-) + \sum\limits_{j\in J(n)} H_{ji} & \mbox{ if } i\notin J(n).
\end{array}
\end{cases}
\end{equation}
The definition of \(J(n)\) ensures that  
\(V^{i,\varepsilon}(\tau_n^{\varepsilon}) <\theta\) forall \(i\in I\). This ends the $n$-th firing regime and the network starts again to evolve according to the  equation \eqref{POTSUBTH} of the sub-threshold regime.

\section{Results}\label{sec:mainresult}
Recall that the network is synchronized at time \(t\) if all the neurons spike
simultaneously at time \(t\). We have already introduced the sequence of spiking times
\(\tau_1^\varepsilon, \cdots, \tau_n^\epsilon, \cdots\) at which at least one neuron emits a spike.
Here, we are interested in events of the form 
\begin{align}
\label{eq:defevenements}
S_k^\varepsilon & := \{\mbox{ the network is synchronized at time } \tau_k^\varepsilon \}\\
\label{eq:defsynchroavantn}
BS_n^\varepsilon&:= \{\mbox{ the network has been synchronized at least once before time }  \tau_n^\varepsilon\}.
\end{align} 
Obviously, we have
\begin{align*}
BS_n^\varepsilon = \cup_{k=1}^n S_k^\varepsilon\\
S_k^\varepsilon = \{J(k) = I\}.
\end{align*}
In a deterministic framework, once the network is synchronized it remains synchronized for ever. But when stochastic perturbations
are considered the potentials of the neurons do not remain equal after synchronization. Therefore, it is 
not guarantied that the network get synchronized again at the next spiking time. Nevertheless, an estimation of the probability of this event is provided by the following theorem:
\begin{theorem}\label{THSTABSYNCHRO}
	There exists \(\varepsilon_0 > 0\), such that for every \(\varepsilon \in(0, \varepsilon_0]\) the probability that the network 
	stays synchronized at time \(\tau_{n+1}^\varepsilon\) given that  for some integer $n$ it is synchronized at time \(\tau_n^\varepsilon\) satisfies, 
	\[
	\mathbb{P}\left(S_{n+1}^\varepsilon\middle|S_{n}^\varepsilon\right)\geq
	\left(1-\exp\left(-\gamma\frac{m^2}{4\varepsilon}\right)\right)^{N}.
	\]
\end{theorem}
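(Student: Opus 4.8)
The plan is to condition throughout on the event $S_n^\varepsilon$. By \eqref{FIRE!}, on $S_n^\varepsilon$ every neuron satisfies $V^{i,\varepsilon}(\tau_n^\varepsilon)=V_r$, so after the $n$-th firing all neurons share the \emph{same} initial condition and, by \eqref{SOLSUBTHPOT}, the same deterministic drift $\phi_t(V_r)$; they differ only through their own stochastic integrals. Shifting the time origin to $\tau_n^\varepsilon$ and using the strong Markov property, I would write, for $t\ge 0$,
\[
V^{i,\varepsilon}(t)=\phi_{t}(V_r)+X_t^{i},\qquad X_t^{i}:=\sqrt{\varepsilon}\,e^{-\gamma t}\int_{0}^{t}e^{\gamma s}\,dW_s^{i},
\]
where the processes $(X^{i})_{i\in I}$ are \emph{independent} Ornstein--Uhlenbeck processes with $X_0^{i}=0$ and $\mathrm{Var}(X_t^{i})=\tfrac{\varepsilon}{2\gamma}(1-e^{-2\gamma t})\le \tfrac{\varepsilon}{2\gamma}$ for every $t$. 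This reduces the problem to controlling how far apart the independent noises $X^i$ can be at the next firing.

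Next I would isolate a clean \emph{sufficient} condition for re-synchronization. Let $\tau^{*}$ be the first firing time after the reset and let $i_0$ be the neuron attaining the threshold there (a.s. a singleton, as noted for $J^0$ in the model). Since $\phi_{\tau^*}(V_r)$ is common to all neurons, $V^{i_0,\varepsilon}(\tau^{*}-)=\theta$ forces $X_{\tau^{*}}^{i_0}=\max_{k\in I}X_{\tau^{*}}^{k}$, and for any other neuron $i$ the firing-regime rule requires $V^{i,\varepsilon}(\tau^{*}-)+H_{i_0 i}\ge\theta$, i.e. $X_{\tau^{*}}^{i_0}-X_{\tau^{*}}^{i}\le H_{i_0 i}$. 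Because the network is fully connected with $H_{ji}\ge m>0$, a \emph{single} kick from $i_0$ already suffices, so $S_{n+1}^\varepsilon$ holds as soon as no neuron lags the leader by more than $m$, that is $\max_{k}X_{\tau^{*}}^{k}-\min_{k}X_{\tau^{*}}^{k}\le m$. I would enforce this through the stronger but decoupled half-band requirement $|X_{\tau^{*}}^{i}|\le m/2$ for all $i$, which clearly makes the range at most $m$. Hence
\[
\mathbb{P}\!\left(S_{n+1}^\varepsilon\middle|S_{n}^\varepsilon\right)\ge \mathbb{P}\!\left(\textstyle\bigcap_{i\in I}\{\,|X_{\tau^{*}}^{i}|\le m/2\,\}\right).
\]

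The announced factor is exactly a Gaussian band estimate: for $Z\sim\mathcal N(0,\sigma^{2})$ with $\sigma^{2}\le \varepsilon/2\gamma$, the Chernoff tail $\mathbb{P}(|Z|>a)\le e^{-a^{2}/(2\sigma^{2})}$ gives $\mathbb{P}(|Z|\le m/2)\ge 1-\exp(-(m/2)^{2}/(2\sigma^{2}))\ge 1-\exp(-\gamma m^{2}/(4\varepsilon))$, which is where the half-kick $m/2$ and the stationary variance $\varepsilon/2\gamma$ combine into the constant $\gamma m^{2}/4$. Since $W^{1},\dots,W^{N}$ are independent, the band events should factorize and produce the product $\big(1-e^{-\gamma m^{2}/(4\varepsilon)}\big)^{N}$. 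The main obstacle is precisely that $\tau^{*}$ is a stopping time built from \emph{all} coordinates, so $X_{\tau^{*}}^{i}$ is not a fixed-time Gaussian and the coordinates are no longer independent once evaluated at $\tau^{*}$; moreover replacing $\tau^*$ by the whole random interval $[0,\tau^{*}]$ through a supremum would inflate the tail and spoil the sharp constant. Reconciling the single-time Gaussian tail with the product structure at the random time $\tau^{*}$ is the crux, and I would resolve it with the uniform-in-time exponential estimates for the Ornstein--Uhlenbeck process provided by the large deviation principle in the Appendix, which bound $X^{i}$ at $\tau^{*}$ by the stationary tail while keeping the rate exactly at $\gamma m^{2}/(4\varepsilon)$; the role of $\varepsilon_0$ is to guarantee that these estimates and the almost-sure uniqueness of the leader are in force. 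Independence of the $W^{i}$ then closes the argument.
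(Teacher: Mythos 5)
Your reduction to $n=0$ via the Markov property, your sufficient condition for re-synchronization (the spread of the noises at the first firing being at most $m$, enforced by the half-band $|X^i_{\tau^*}|\le m/2$), the use of independence of the $W^i$, and the constant $\gamma m^2/(4\varepsilon)$ all match the paper. But the crux you correctly identify --- that $\tau^*$ is a stopping time built from all coordinates, so the $X^i_{\tau^*}$ are neither fixed-time Gaussians nor independent --- is left unresolved, and the resolution you sketch would fail as stated. There is no general estimate bounding an Ornstein--Uhlenbeck process at a stopping time by its stationary fixed-time tail: for an arbitrary stopping time $\tau$ one can have $\mathbb{P}(|X_\tau|\ge m/2)=1$ (take $\tau$ the hitting time of level $m/2$ in absolute value, which is a.s.\ finite by recurrence), so any valid argument must exploit the specific structure of $\tau^*$, and yours never does. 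The paper's mechanism, which is exactly the missing idea, is a tube argument around the deterministic trajectory $v(t)=\phi_t(V_r)$: since $\beta>\theta$ (a hypothesis you never invoke, and without which the argument breaks down), $v$ crosses the threshold, so one can choose deterministic times $t_1(\delta)<t_2(\delta)$ with $v(t_1(\delta))=\theta-\delta$ and $v(t_2(\delta))=\theta+\delta$. On the event that every $\tilde{V}^{i,\varepsilon}_0$ stays within $\delta=m/2$ of $v$ on the \emph{deterministic} interval $[0,t_2(\delta)]$, the random time $\tau_1^\varepsilon$ is automatically confined to $(t_1(\delta),t_2(\delta))$ and every potential at $\tau_1^\varepsilon$ exceeds $\theta-2\delta=\theta-m$, which forces synchronization. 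Because this event is an intersection over $i$ of events each measurable with respect to a single Brownian motion on a fixed time interval, the probability factorizes as $(1-P_\varepsilon(\tfrac m2,t_2(\tfrac m2)))^N$; no conditioning at a random time is ever needed.

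Moreover, the reason you rejected this route rests on a misconception: you assert that passing to a supremum over a time interval ``would inflate the tail and spoil the sharp constant.'' It does not. The Freidlin--Wentzell rate computed in the Appendix for the supremum over $[0,T]$ is $\frac{\gamma\delta^2}{2}\left(1+\frac{\cosh(\gamma T)}{\sinh(\gamma T)}\right)$, which is \emph{strictly larger} than $\gamma\delta^2$; with $\delta=m/2$ this strict inequality is precisely what yields, for all $\varepsilon\le\varepsilon_0$, the non-asymptotic bound $P_\varepsilon(\tfrac m2,t_2(\tfrac m2))\le\exp(-\gamma m^2/(4\varepsilon))$. In other words, the uniform-in-time control costs nothing at the exponential scale compared with your fixed-time stationary Chernoff bound --- the step you dismissed is the paper's proof. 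This is also the true role of $\varepsilon_0$: it converts the asymptotic large-deviation limit into that explicit inequality for small $\varepsilon$; it has nothing to do with almost-sure uniqueness of the ``leader,'' which plays no role in the argument.
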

This result essentially shows that the synchronization  is not destabilized by small stochastic perturbations. 
More precisely, if the intensity $\varepsilon$ of the noise goes to $0$, then the probability that the network stays
synchronized goes to \(1\), and for sufficiently small intensity of the noise the network stays synchronized with high probability.
The proof of Theorem~\ref{THSTABSYNCHRO} is given in Section \ref{PROOFSTABSYNCHRO}. 
The main idea is to use a large deviations principle to obtain a lower bound on the probability that the potential of each neuron 
remains close to the deterministic synchronized solution until some neurons reach the threshold potential. 
In this case, the potential of the neurons are sufficiently close to each other at the firing time,
and the interactions between neurons induce the synchronization.

Now we study the probability that the networks get synchronized.

\begin{theorem}\label{th:mainresultsynch} 
	Assume \((V^{i,\varepsilon}(t), i\in\llbracket 1,N\rrbracket,  t\geq 0)\) evolves according to \eqref{POTSUBTH} and \eqref{FIRE!} and that the number \(N\) of neurons 
	in the network satisfies 
	\begin{equation}\label{NUMNEURON}
	N \geq \dfrac{\theta-\alpha}{m}\left(\dfrac{\theta-\alpha}{m}+2\right).
	\end{equation}
	Then, for all \(n\in [\frac{\theta-\alpha}{m},\frac{mN}{\theta-\alpha}]\), the probability
	that the network synchronizes during the first \(n\) spiking times satisfies 
	\begin{multline}\label{BORNESYNCHRO}
	\mathbb{P}\left(BS_n^\varepsilon\right) \geq \left(1 - N\Phi\left(-m\sqrt{\dfrac{2\gamma}{\varepsilon}} \inf\left\{n-\frac{\theta-\alpha}{m} ,\frac{\beta-\theta}{m}\right\}\right) \right)_+\\
	\times\left(1  - \Phi\left(-m\sqrt{\dfrac{2\gamma}{\varepsilon}}\left(\frac{N}{n} - \frac{\theta - \alpha}{m}\right)\right) - n\Phi\left(-m\sqrt{\dfrac{2\gamma}{\varepsilon}}\right)\right)^N_+,
	\end{multline}
	where \(\Phi\) denotes the cumulative distribution function of a standard Gaussian random variable and \((x)_+:= \sup(x,0)\) denotes the positive part of a real number \(x\).
\end{theorem}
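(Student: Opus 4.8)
The plan is to adapt the deterministic synchronization mechanism of \cite{Catsigeras_Guiraud} to the noisy dynamics \eqref{POTSUBTH}--\eqref{FIRE!} by exhibiting an explicit event, measurable with respect to the driving Brownian motions, that is contained in \(BS_n^\varepsilon\) and whose probability can be bounded below by the right-hand side of \eqref{BORNESYNCHRO}. Recall the deterministic picture: between firings the flow \(\phi_t\) is an affine contraction toward \(\beta\), so that \(\phi_t(x)-\phi_t(y)=(x-y)e^{-\gamma t}\) shrinks, while each firing adds an excitatory kick of size at least \(m\) to every non-spiking neuron. Consequently the potentials are driven upward and bunched below \(\theta\), and after of order \(\frac{\theta-\alpha}{m}\) firings a single spike triggers a full cascade, i.e.\ synchronization. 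The Mirollo--Strogatz order-preservation argument is unavailable here because the noise can reorder the potentials, so I would replace it by quantitative control of the noise increments.

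The single estimate underlying every \(\Phi\)-term is the following. From \eqref{SOLSUBTHPOT}, on each inter-spike interval the stochastic part of \(\tilde V^{i,\varepsilon}_n\) is a centred Gaussian variable \(\sqrt{\varepsilon}\,e^{-\gamma t}\int_{\tau_n^\varepsilon}^{t}e^{\gamma s}\,dW^i_s\) with variance \(\frac{\varepsilon}{2\gamma}\bigl(1-e^{-2\gamma(t-\tau_n^\varepsilon)}\bigr)\le \frac{\varepsilon}{2\gamma}\); hence the probability that it drops below \(-m r\), for a margin \(m r>0\), is at most \(\Phi\!\bigl(-m r\sqrt{2\gamma/\varepsilon}\bigr)\). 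This is the origin of the scaling \(m\sqrt{2\gamma/\varepsilon}\), and it is crucial that the relevant noise enters through \emph{single} intervals (resets and mean reversion prevent an additive accumulation of variance), which is why \(n\) appears only through union bounds and never inside the square root.

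I would then build the favorable event as the intersection of two sub-events matching the two factors of \eqref{BORNESYNCHRO}. The first sub-event realises the accumulation phase: each neuron is driven across its displacement of at most \(\theta-\alpha\) by the kicks received over the \(n\) firings, with a deterministic surplus \(m\bigl(n-\frac{\theta-\alpha}{m}\bigr)\), capped by \(\frac{\beta-\theta}{m}\) because the drift equilibrium \(\beta\) limits the overshoot a neuron can bank above threshold before firing; requiring that no neuron's noise erodes this surplus and summing the failure probabilities over the \(N\) neurons yields the factor \(\bigl(1-N\Phi(\cdots)\bigr)_+\). The second sub-event guarantees that the synchronizing cascade reaches every neuron: for each neuron the accumulated excitatory input on the decisive interval exceeds \(\theta-\alpha\) with surplus \(m\bigl(\frac{N}{n}-\frac{\theta-\alpha}{m}\bigr)\), while in none of the \(n\) intervals does the noise produce a downward excursion larger than \(m\) (which would break the sorted-cascade structure); bounding these per neuron by \(1-\Phi(\cdots)-n\Phi(\cdots)\) and using the independence of the \(N\) Brownian motions gives the power \(\bigl(\cdots\bigr)^N_+\). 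Multiplying the two lower bounds produces \eqref{BORNESYNCHRO}, and the hypotheses \eqref{NUMNEURON} together with \(n\in[\frac{\theta-\alpha}{m},\frac{mN}{\theta-\alpha}]\) are exactly what force both surpluses \(n-\frac{\theta-\alpha}{m}\) and \(\frac{N}{n}-\frac{\theta-\alpha}{m}\) to be non-negative, so that the construction is feasible and the positive parts are not vacuous.

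The main obstacle is the design of this favorable event: one must specify a firing scenario that provably terminates in a full cascade, allocate to each neuron a guaranteed number and size of kicks so that the two deterministic surpluses are genuinely realised, and do so while keeping the constraints on the noise local (per neuron, per interval) and independent enough to justify the union bounds and the product over \(N\). The delicate points are ensuring that the noise never reorders the potentials in a way that wastes an accumulated kick or triggers a premature partial firing, and handling the interplay between the drift, the resets to \(V_r\), and the cap \(\frac{\beta-\theta}{m}\); the Gaussian estimate above is then routine once the event is correctly set up.
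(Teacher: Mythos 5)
Your reverse-engineering of the bound is structurally faithful to the paper's proof: the same two-factor decomposition (an ``accumulation'' factor carrying a union bound over the \(N\) neurons, and a ``cascade'' factor raised to the power \(N\)), the same Gaussian tail estimate with variance bounded by \(\varepsilon/(2\gamma)\) thanks to mean reversion, and the same reading of the cap \((\beta-\theta)/m\) as the drift equilibrium limiting the surplus a neuron can bank. But the two steps you defer as ``the main obstacle'' are precisely where the proof lives, and your sketch of them points in a direction that does not work. The existence of your ``decisive interval'' in which at least \(N/n\) neurons fire simultaneously is never justified; in the paper it comes from a pigeonhole argument (Lemma~\ref{lemma minorantchaussette}): if every neuron fires at least once among the first \(n\) firing times, then necessarily \(\max_{\ell\leq n}\Card(J(\ell))\geq N/n\). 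The complement of that event is controlled neuron by neuron through the closed-form representation of Lemma~\ref{POTTAUN}: a neuron that has never fired has received \(n\) kicks of size at least \(m\), so its potential at the single time \(\tau_n^\varepsilon\) is Gaussian with mean at least \(\inf\{\alpha+mn,\beta\}\) and variance at most \(\varepsilon/(2\gamma)\) --- one fixed-time estimate, not a per-interval or pathwise control of the noise. In particular, no firing scenario needs to be designed and no ordering needs to be preserved; your concerns about the noise ``reordering the potentials'' and ``breaking the sorted-cascade structure'' are remnants of the Mirollo--Strogatz mechanism, which is exactly the argument the paper declares non-adaptable and then avoids. The cascade itself is one-step: given \(\Card(J(\ell))\geq N/n\), each remaining neuron receives a kick of at least \(mN/n\geq\theta-\alpha\) and therefore fires at the same instant provided its potential is at least \(\alpha\); the term \(n\Phi\bigl(-m\sqrt{2\gamma/\varepsilon}\bigr)\) is the price (Lemma~\ref{lemmaqueuededistribution}, proved by induction on the firing index) of the potential having dipped below \(\alpha\) at one of the at most \(n\) earlier firing times.

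The second gap is your final step, ``multiplying the two lower bounds'' for the intersection of the two sub-events. As you describe them, both sub-events constrain the same Brownian paths over overlapping time windows (your second sub-event explicitly restricts the noise on all \(n\) intervals), so they are neither independent nor obviously positively correlated, and \(\mathbb{P}(E_1\cap E_2)\geq \mathbb{P}(E_1)\mathbb{P}(E_2)\) has no justification. The paper sidesteps this entirely by conditioning rather than intersecting: since the synchronization event is contained in the large-firing event, Bayes' formula gives the exact identity
\[
\mathbb{P}\left(BS_n^\varepsilon\right)=\mathbb{P}\Bigl(\max_{\ell\leq n}\Card(J(\ell))=N \;\Big|\; \max_{\ell\leq n}\Card(J(\ell))\geq \tfrac{N}{n}\Bigr)\,
\mathbb{P}\Bigl(\max_{\ell\leq n}\Card(J(\ell))\geq \tfrac{N}{n}\Bigr),
\]
and the conditional factor is bounded below in Lemma~\ref{lemmaprobasynchrodechargebcp}, where the power \(N\) comes from the conditional independence of the increments of the \(N\) Ornstein--Uhlenbeck processes given their values at the start of the decisive interval. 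To make your plan rigorous you would have to either prove the positive correlation you implicitly use, or restructure the argument along exactly these conditional lines.
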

Note that the condition \eqref{NUMNEURON} ensures that the integer $n_0=\left\lceil \tfrac{\theta-\alpha}{m}\right\rceil+1$
always belongs to the interval $[\frac{\theta-\alpha}{m},\frac{mN}{\theta-\alpha}]$. 
The sequence of events \((BS_n^\varepsilon)_n\) is obviously increasing. So, we deduce from Theorem~\ref{th:mainresultsynch} that for every $n\geq n_0$ (without any upper bound constraint on \(n\))
\begin{align}
\nonumber\mathbb{P}\left(BS_n^\varepsilon\right)
\geq& \mathbb{P}\left(BS_{n_0}^\varepsilon\right)\\
\nonumber \geq&\left(1 - N\Phi\left(-p_1^\varepsilon \inf\{1+ \lceil p_2\rceil -p_2,p_3\}\right)\right)_+\\
\nonumber&\times\left(1  - \Phi\left(-p_1^\varepsilon \frac{N-p_2\lceil p_2+1\rceil}{\lceil p_2+1\rceil}\right) - \lceil p_2+1\rceil\Phi\left(-p_1^\varepsilon \right)\right)_+^N\\
\geq&\left(1 - N\Phi(-p_1^\varepsilon \inf\{1,p_3\})\right)_+(1  - \Phi(-\tfrac{N p_1^\varepsilon}{\lceil p_2+1\rceil} + p_1^\varepsilon p_2) - (p_2+2)\Phi\left(-p_1^\varepsilon \right))_+^N,
\label{BOUNDPAM}
\end{align}
where \(p_1^\varepsilon\), $p_2$ and $p_3$ are the dimensionless parameters defined by 
\[
p_1^\varepsilon:=m\sqrt{\dfrac{2\gamma}{\varepsilon}}\qquad p_2:=\frac{\theta-\alpha}{m}\quad\text{and}\quad p_3:=\frac{\beta-\theta}{m}.
\] 
Note also that condition \eqref{NUMNEURON} can be written as $N \geq p_2(p_2+2)$. An analysis of \eqref{BOUNDPAM} shows that the 
lower bound on $\mathbb{P}(BS_n^\varepsilon)$ is an increasing function of $p_1^\varepsilon$ and a decreasing function of $p_2$.
Therefore, if $n\geq n_0$ the lower bound on
\(\mathbb{P}(BS_n^\varepsilon)\)
increases when the intensity \(\varepsilon\) of noise decreases, and goes to $1$ when  
\(\varepsilon\) 
goes to $0$. This result shows that the phenomenon of synchronization observed in deterministic networks, such as those of \cite{mirollo1990synchronization}, 
is stable under stochastic perturbations. In other words, in presence of small stochastic perturbations, the synchronization occurs in a finite time 
(smaller than $\tau_n^\varepsilon$) with a high probability which goes to 1 in the deterministic limit. 
Moreover as expected in excitatory networks, strong interaction between neurons facilitates the synchronization.
Indeed, our lower bound on the probability of synchronization is an increasing function of the interaction parameter \(m\).
Furthermore, with larger \(m\) our estimation is valid for smaller number of neurons (see \eqref{NUMNEURON}) and smaller number of spiking times (\(n\simeq \tfrac{\theta - \alpha}{m}\)).

\begin{remark}
	In the proof of Theorem~\ref{THSTABSYNCHRO}, we obtain a control on the probability that \(\Card(J^1(n+1))=N-1\) given \(J(n)=I\)
	which is not a necessary condition for synchronization. Indeed, order the membrane potential of the neurons at time \(\tau^\varepsilon_{n+1}-\)
	\[
	\theta= V^{\kappa(1),\varepsilon}(\tau_{n+1}^{\varepsilon}-) > V^{\kappa(2),\varepsilon}(\tau_{n+1}^{\varepsilon}-)  > \cdots > V^{\kappa(N),\varepsilon}(\tau_{n+1}^{\varepsilon}-) .
	\]
	A sufficient condition for synchronization is
	\[
	V^{\kappa(i),\varepsilon}(\tau_{n+1}^{\varepsilon}-) \geq \theta - (\kappa(i) - 1)m, \quad \forall i \in \llbracket 1, N\rrbracket.
	\]
	This last condition is weaker than the bound we use in the proof of 
	Theorem~\ref{THSTABSYNCHRO} but, 
	unfortunately, even  the law of \(V^{\kappa(i),\varepsilon}(\tau_{n+1}^{\varepsilon}-)\) is out of reach: it is related to the order statistic
	of a family of \(N\) Ornstein-Uhlenbeck processes at the first time at which one of them hits \(\theta\). This law is unknown.
	\cite{sacerdote_zucca2013} give first results in this direction in a very simplified setting with \(N=2\) neurons and \(\gamma = 0\) (Perfect Integrate
	and Fire model). See also a recent review for the LIF model with \(N=1\) in \cite{sacerdote_giraudo2013}.  
	Similar results in our context should permit to accurately  evaluate the probability of synchronization.
\end{remark}

\section{Numerical simulations}\label{sec:numeric}

In this section, we illustrate our theorems with figures obtained by numerical simulations of the model.

\begin{figure}[!ht]
	\begin{center}
		\resizebox{11cm}{!}{\includegraphics{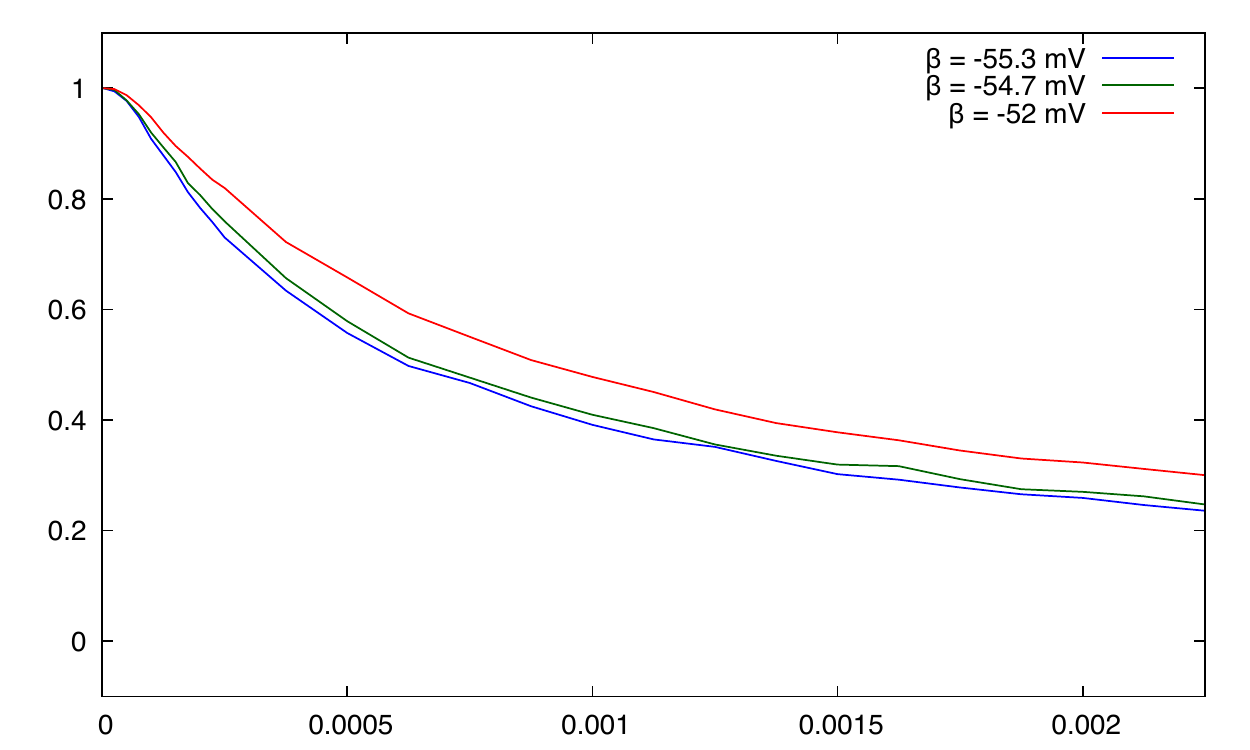}}
		\resizebox{11cm}{!}{\includegraphics{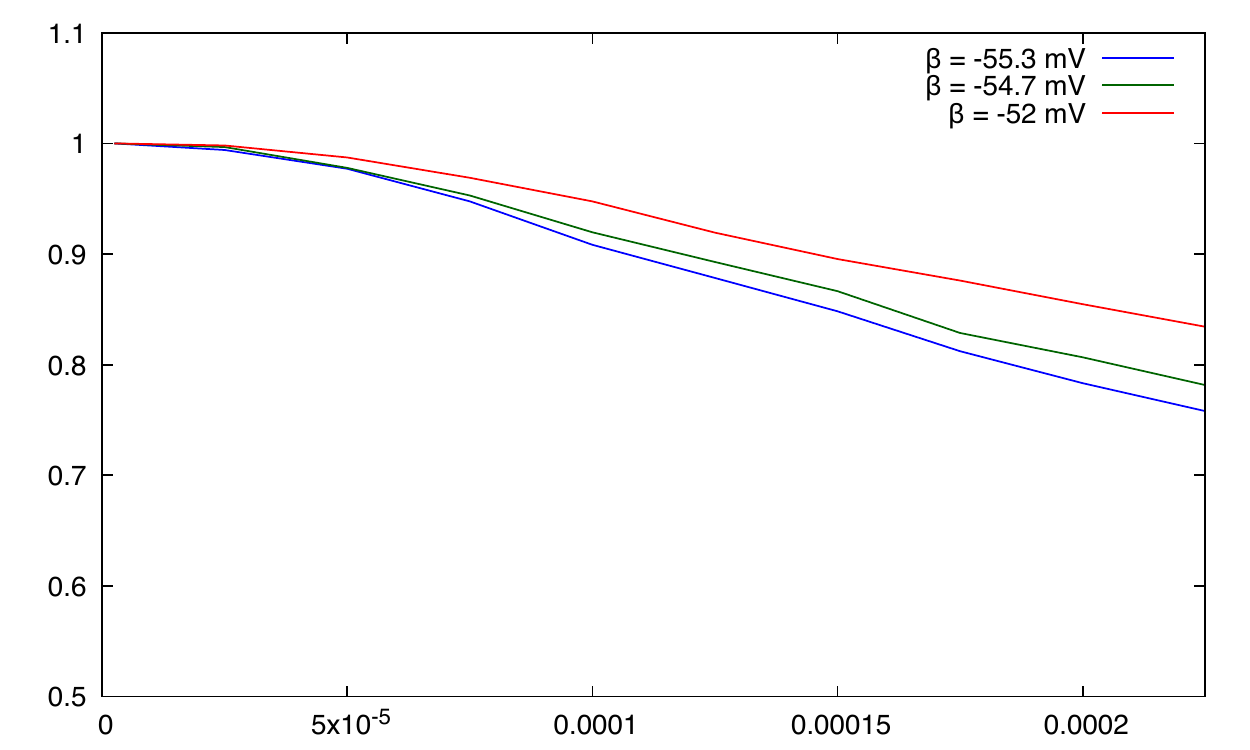}}
		\caption{$\mathbb{P}\left(S_{n+1}^\varepsilon\middle|S_{n}^\varepsilon\right)$ versus $\varepsilon$ for $N=1599$, \(\gamma = 100\) $\text{s}^{-1}$, \(\theta = -55\) mV, $V_r=-70$ mV, $m=0.75$ mV. Red plot \(\beta = -52\) mV, green plot  \(\beta=-54.7\) mV  and blue plot \(\beta = -55.3\) mV. In the upper panel $\varepsilon\in[0, 2.25\times 10^{-3}\text{V}^2/\text{s}]$ and  the lower panel is a zoom in the range $\varepsilon\in[0,2.25\times 10^{-4}\text{V}^2/\text{s}]$.}  
		\label{FIGTH1}
	\end{center}
\end{figure}

In Figure~\ref{FIGTH1}, we illustrate Theorem~\ref{THSTABSYNCHRO}: we  
plot as function of the intensity of the noise \(\varepsilon\) 
the probability of the network to be  synchronized at the $(n+1)$-th firing time, given that it was synchronized at the $n$-th firing time. For this figure, the simulation has been done with $N=1599$ neurons,  \(\gamma = 100\) $\text{s}^{-1}$, \(\theta = -55\) mV, $V_r=-70$ mV, $m=0.75$ mV,
\(H_{ji} = m\) for all \(i, j\) in \(\llbracket 1,n\rrbracket\), and three values of 
$\beta$: \(\beta = -52\) mV (red), \(\beta=-54.7\) mV (green) and \(\beta = -55.3\) mV (blue). 
The two first values of $\beta$ satisfy our hypothesis  \(\beta>\theta\), but not the last one.

As predicted by our theorem, for $\beta>\theta$ the plots show that once the network is synchronized, it stays synchronized with very large  
probability, provided the noise is sufficiently small. In other words, the synchronized state is stable under small stochastic perturbations.

When $\beta<\theta$, no spike are produced in the deterministic case ($\varepsilon=0$). However, in presence of noise ($\varepsilon>0$)  
spikes are emited even if $\beta<\theta$. 
We observe no significative difference  in the behavior of 
$\mathbb{P}\left(S_{n+1}^\varepsilon\middle|S_{n}^\varepsilon\right)$ as a function of $\varepsilon$ for  $\beta=-55.3\text{ mV}<\theta$ and  $\theta<\beta=-54.7\text{ mV}$.

\begin{figure}[!ht]
	\begin{center}
		\resizebox{11cm}{!}{\includegraphics{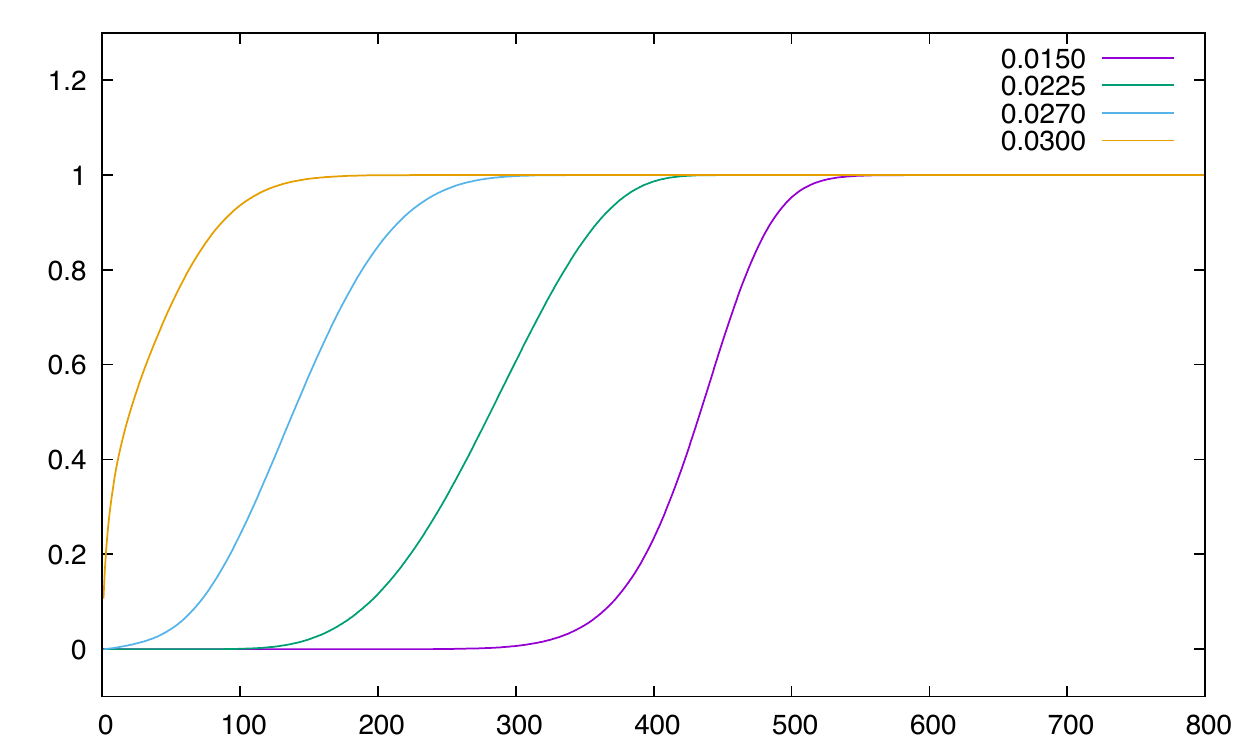}}
		\resizebox{11cm}{!}{\includegraphics{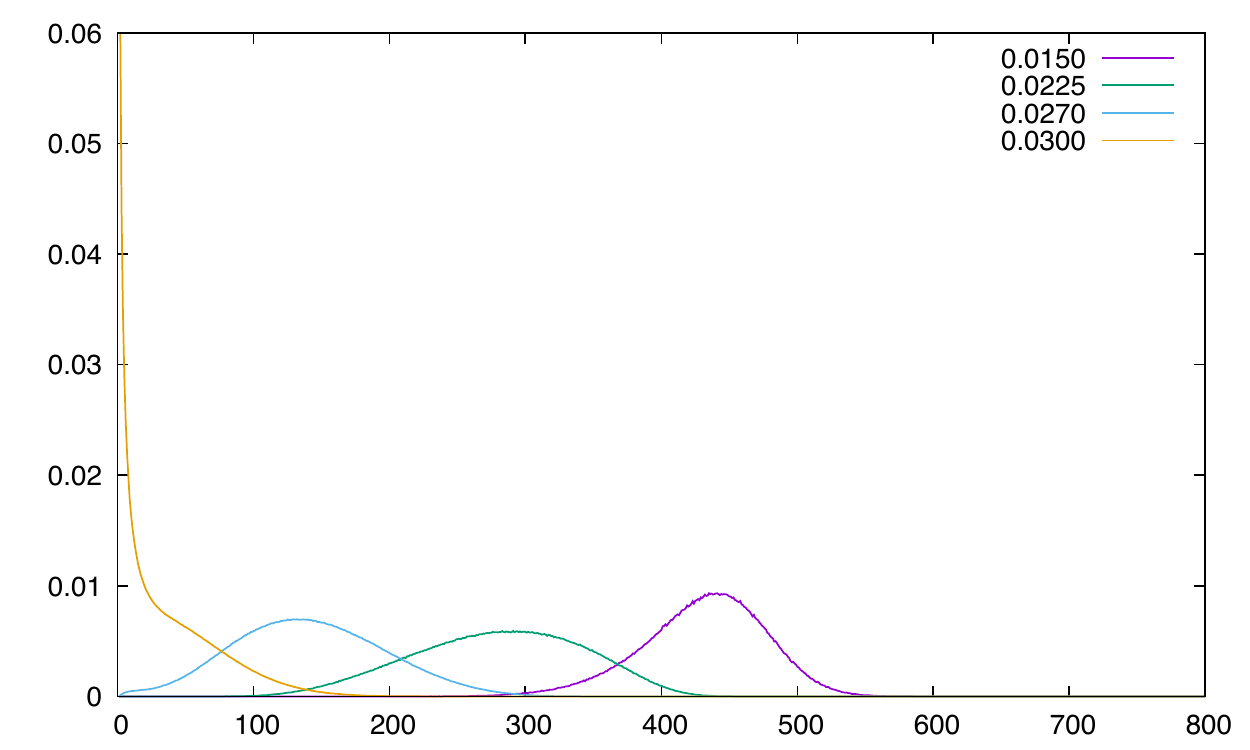}}
		\caption{$\mathbb{P}\left(BS_{n}^\varepsilon\right)$ versus $n$ (upper panel) and $\mathbb{P}\left(BS_{n}^\varepsilon \setminus BS^\varepsilon_{n-1}\right)$ probability of first synchronization at the $n$-th firing time versus $n$ (lower pannel). The values of
			the parameters are:  $N=1599$, $\gamma=100$ $\text{s}^{-1}$, $\theta=-55$ mV, $V_r=-70$ mV , $\beta=-52$ mV  
			and $\varepsilon=0,225\times 10^{-4}\text{V}^2/\text{s}$. Magenta $m=0.0150$ mV, green $m=0.0225$ mV, blue  $m=0.0270$ mV and yellow  $m=0.0300$ mV.
			The initial distribution of the membrane potentials are i.i.d. with uniform
			law on \([-100\text{ mV},-55\text{ mV}]\).}  
		\label{FIGTH2}
	\end{center}
\end{figure}

Figure~\ref{FIGTH2} shows in the upper panel the probability $\mathbb{P}\left(BS_{n}^\varepsilon\right)$ of synchronization before the $n$-th firing of the network as a function of $n$.
In the lower panel, the figure shows the probability $\mathbb{P}\left(BS_{n}^\varepsilon \setminus BS^\varepsilon_{n-1}\right)$ of first synchronization at the $n$-th firing time versus $n$.
The plots are done for several values of the intensity of the interactions. For the deterministic model ($\varepsilon=0$),  synchronization occurs in a finite time \cite{mirollo1990synchronization, Catsigeras_Guiraud}. In Figure \ref{FIGTH2}, we see that even in presence of noise the synchronization occurs in a short time with a high probability. 
We also observe the same monotony properties 
as the upper bound \eqref{BOUNDPAM}: the higher is the intensity of the interactions, the higher is the probability that the network synchronizes fast.
The initial condition of the membrane potential is
i.i.d. with a uniform distribution on \([-100\text{ mV}, -55\text{ mV}]\).

\section{Proofs}\label{PROOFS}
\subsection{Proof of the Result of Synchronization (Th.~\ref{th:mainresultsynch})}
The proof consists in estimating  
\begin{align}
\mathbb{P}\left(\sup_{\ell \in\llbracket 1,n\rrbracket} \Card(J(\ell)) = N\middle|
\sup_{\ell \in\llbracket 1,n\rrbracket} \Card(J(\ell)) \geq A\right)\label{eq:synchrocond}
\\ 
\quad\text{and}\quad
\mathbb{P}\left(\sup_{\ell \in\llbracket 1,n\rrbracket} \Card(J(\ell)) \geq A\right),\label{eq:cardjsupa}
\end{align}
in order to obtain
\[
\mathbb{P}(BS_n^\varepsilon) = \mathbb{P}\left(\sup_{\ell \in\llbracket 1,n\rrbracket} \Card(J(\ell)) = N\right)
\]
using Bayes formula. A lower bound of \eqref{eq:cardjsupa} is obtained in Lemma~\ref{lemma minorantchaussette} for \(A=\frac{N}{n}\). The control of the quantity \eqref{eq:synchrocond}  is given in Lemma~\ref{lemmaprobasynchrodechargebcp}.

We first prove a technical lemma for the
explicit expression of the potential \(V^{i,\varepsilon}\)
before the first spiking time of neuron \(i\).

\newpage

\begin{lemma}\label{POTTAUN}
	For all $n\geq 1$ and $i\in I$ we have

	\begin{equation}
	\begin{split}
	\mathbbm{1}_{\left\{i\notin\cup_{\ell=1}^n J(\ell)\right\}}
	V^{i,\varepsilon}(\tau^{\varepsilon}_n)=&\mathbbm{1}_{\left\{i\notin\cup_{\ell=1}^n J(\ell)\right\}}
	\left(\phi_{\tau^{\varepsilon}_n}(V^{i,\varepsilon}(0))+\sum_{\ell=1}^n
	e^{-\gamma(\tau^{\varepsilon}_n-\tau_\ell^\varepsilon)}\sum_{j\in J(\ell)}H_{ji}\right.\\
	&\left.+\sqrt{\varepsilon}e^{-\gamma \tau^{\varepsilon}_n} \int_0^{\tau^{\varepsilon}_n} e^{\gamma s} dW^i_s\right),
	\end{split}
	\end{equation}
	where \(\mathbbm{1}_{S}\) denotes the indicator function of a set \(S\), that is \(\mathbbm{1}_{S}(\omega)=1\) for \(\omega\in S\) and 
	\(\mathbbm{1}_{S}(\omega)=0\) for \(\omega\notin S\).
\end{lemma}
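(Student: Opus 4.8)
The plan is to argue by induction on $n$, splitting the evolution of $V^{i,\varepsilon}$ into the sub-threshold Ornstein--Uhlenbeck flow between consecutive firing times and the synaptic kick received at each firing time. The algebraic fact I will rely on throughout is that the deterministic flow satisfies $\phi_t(x)-\beta = e^{-\gamma t}(x-\beta)$: after subtracting the equilibrium value $\beta$, the map $\phi_t$ reduces to multiplication by $e^{-\gamma t}$, so that $\phi_s\circ\phi_t=\phi_{s+t}$ and any additive increment is simply rescaled by $e^{-\gamma s}$ under $\phi_s$.

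For the base case $n=1$, on the event $\{i\notin J(1)\}$ the firing rule \eqref{FIRE!} gives $V^{i,\varepsilon}(\tau_1^\varepsilon)=V^{i,\varepsilon}(\tau_1^\varepsilon-)+\sum_{j\in J(1)}H_{ji}$. Since $\tau_0^\varepsilon=0$, the left limit is read directly from \eqref{SOLSUBTHPOT} as $V^{i,\varepsilon}(\tau_1^\varepsilon-)=\phi_{\tau_1^\varepsilon}(V^{i,\varepsilon}(0))+\sqrt{\varepsilon}e^{-\gamma\tau_1^\varepsilon}\int_0^{\tau_1^\varepsilon}e^{\gamma s}\,dW_s^i$. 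As $e^{-\gamma(\tau_1^\varepsilon-\tau_1^\varepsilon)}=1$, this matches the claimed formula with a single term in the sum over $\ell$.

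For the inductive step, the starting observation is that $\{i\notin\cup_{\ell=1}^{n+1}J(\ell)\}\subset\{i\notin\cup_{\ell=1}^{n}J(\ell)\}$, so multiplying the induction hypothesis by the smaller indicator is legitimate and I may substitute the expression for $V^{i,\varepsilon}(\tau_n^\varepsilon)$ on this event. I would then write $V^{i,\varepsilon}(\tau_{n+1}^\varepsilon-)$ via \eqref{SOLSUBTHPOT} as $\phi_{\tau_{n+1}^\varepsilon-\tau_n^\varepsilon}\bigl(V^{i,\varepsilon}(\tau_n^\varepsilon)\bigr)+\sqrt{\varepsilon}e^{-\gamma\tau_{n+1}^\varepsilon}\int_{\tau_n^\varepsilon}^{\tau_{n+1}^\varepsilon}e^{\gamma s}\,dW_s^i$ and apply the scaling property of $\phi$: the term $\phi_{\tau_n^\varepsilon}(V^{i,\varepsilon}(0))$ propagates to $\phi_{\tau_{n+1}^\varepsilon}(V^{i,\varepsilon}(0))$ by the semigroup identity, each kick sum $e^{-\gamma(\tau_n^\varepsilon-\tau_\ell^\varepsilon)}\sum_{j\in J(\ell)}H_{ji}$ acquires the extra factor $e^{-\gamma(\tau_{n+1}^\varepsilon-\tau_n^\varepsilon)}$ and so becomes $e^{-\gamma(\tau_{n+1}^\varepsilon-\tau_\ell^\varepsilon)}\sum_{j\in J(\ell)}H_{ji}$, and the Wiener integral up to $\tau_n^\varepsilon$ is rescaled and merged with the new increment over $(\tau_n^\varepsilon,\tau_{n+1}^\varepsilon)$ into a single integral up to $\tau_{n+1}^\varepsilon$. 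Adding the kick $\sum_{j\in J(n+1)}H_{ji}$ received at $\tau_{n+1}^\varepsilon$ (on $\{i\notin J(n+1)\}$, with trivial factor $e^{-\gamma(\tau_{n+1}^\varepsilon-\tau_{n+1}^\varepsilon)}=1$) extends the sum to $\ell=n+1$, yielding exactly the desired expression.

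The computation is essentially bookkeeping, and I expect the only delicate point to be the interplay between the indicator functions and the recursive substitution, namely verifying the inclusion of events above so that the induction hypothesis can be inserted validly at step $n+1$. The affine rather than linear nature of $\phi$ (the presence of the constant $\beta$) is harmless: it is absorbed once and for all into $\phi_{\tau_{n+1}^\varepsilon}(V^{i,\varepsilon}(0))$, while the increments produced by the kicks and by the noise carry no $\beta$ and transform purely multiplicatively.
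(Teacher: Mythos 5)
Your proposal is correct and follows essentially the same route as the paper: induction on $n$, the firing rule \eqref{FIRE!} combined with the Ornstein--Uhlenbeck representation \eqref{SOLSUBTHPOT}, the affine property $\phi_t(x+y)=\phi_t(x)+ye^{-\gamma t}$ together with the semigroup identity, and the event inclusion $\{i\notin\cup_{\ell=1}^{n+1}J(\ell)\}\subset\{i\notin\cup_{\ell=1}^{n}J(\ell)\}$ to justify inserting the induction hypothesis (the paper phrases this as identities between indicator functions, which is the same point). No gaps.
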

\begin{proof}
	By formula (\ref{FIRE!}) 
	\[
	\mathbbm{1}_{\left\{i\notin J(1)\right\}}V^{i,\varepsilon}(\tau^{\varepsilon}_1)=\mathbbm{1}_{\left\{i\notin J(1)\right\}}
	\left(\phi_{\tau^{\varepsilon}_1}(V^{i,\varepsilon}(0))+\sum_{j\in J(1)}H_{ji}+\sqrt{\varepsilon}e^{-\gamma \tau^{\varepsilon}_1} \int_0^{\tau^{\varepsilon}_1} e^{\gamma s} dW^i_s\right)
	\]
	and the lemma is true for $n=1$. From the same formula (\ref{FIRE!}) at time $\tau_{n+1}^{\varepsilon}$ we have
	\begin{equation}\label{VTAUNP1}
	\begin{split}
	\mathbbm{1}_{\left\{i\notin\cup_{\ell=1}^{n+1} J(\ell)\right\}}V^{i,\varepsilon}(\tau^{\varepsilon}_{n+1})=&\mathbbm{1}_{\left\{i\notin\cup_{\ell=1}^{n+1} J(\ell)\right\}}
	\left(\phi_{\tau^{\varepsilon}_{n+1}-\tau^{\varepsilon}_{n}}(V^{i,\varepsilon}(\tau^{\varepsilon}_{n}))+\sum_{j\in J(n+1)}H_{ji}\right.\\
	&\left.+\sqrt{\varepsilon}e^{-\gamma \tau^{\varepsilon}_{n+1}} \int_{\tau^{\varepsilon}_{n}}^{\tau^{\varepsilon}_{n+1}} e^{\gamma s} dW^i_s\right).
	\end{split}
	\end{equation}
	Supposing the lemma is true at rank $n$ and using $\phi_t(x+y)=\phi_t(x)+ye^{-\gamma t}$ we obtain
	\begin{equation}\label{PHIREC}
	\begin{split}
	\phi_{\tau^{\varepsilon}_{n+1}-\tau^{\varepsilon}_{n}}\left(\mathbbm{1}_{\left\{i\notin\cup_{\ell=1}^{n} J(\ell)\right\}}V^{i,\varepsilon}(\tau^{\varepsilon}_{n})\right)=
	&\phi_{\tau^{\varepsilon}_{n+1}-\tau^{\varepsilon}_{n}}\left(\mathbbm{1}_{\left\{i\notin\cup_{\ell=1}^{n} J(\ell)\right\}}\phi_{\tau^{\varepsilon}_{n}}\left(V^{i,\varepsilon}(0)\right)\right)\\
	&+\left(\sum_{\ell=1}^n
	e^{-\gamma(\tau^{\varepsilon}_{n+1}-\tau_\ell^\varepsilon)}\sum_{j\in J(\ell)}H_{ji}\right.\\
	&\left.+\sqrt{\varepsilon}e^{-\gamma \tau^{\varepsilon}_{n+1}} \int_0^{\tau^{\varepsilon}_n} e^{\gamma s} dW^i_s\right)\mathbbm{1}_{\left\{i\notin\cup_{\ell=1}^{n} J(\ell)\right\}}.
	\end{split}
	\end{equation}
	Using 
	\[
	\mathbbm{1}_{\left\{i\notin\cup_{\ell=1}^{n+1} J(\ell)\right\}}=\mathbbm{1}_{\left\{i\notin\cup_{\ell=1}^{n+1} J(\ell)\right\}}\mathbbm{1}_{\left\{i\notin\cup_{\ell=1}^{n} J(\ell)\right\}}
	\]
	and
	\[
	\mathbbm{1}_{\left\{i\notin\cup_{\ell=1}^{n+1} J(\ell)\right\}}\phi_t\left(x\right)=\mathbbm{1}_{\left\{i\notin\cup_{\ell=1}^{n+1} J(\ell)\right\}}\phi_t\left(\mathbbm{1}_{\left\{i\notin\cup_{\ell=1}^{n} J(\ell)\right\}}x\right),
	\]
	from equations (\ref{VTAUNP1}) and (\ref{PHIREC}), we deduce
	\begin{equation*}
	\begin{split}
	\mathbbm{1}_{\left\{i\notin\cup_{\ell=1}^{n+1} J(\ell)\right\}}V^{i,\varepsilon}(\tau^{\varepsilon}_{n+1})
	=&\mathbbm{1}_{\left\{i\notin\cup_{\ell=1}^{n+1} J(\ell)\right\}}\left(
	\phi_{\tau^{\varepsilon}_{n+1}-\tau^{\varepsilon}_{n}}\left(\phi_{\tau^{\varepsilon}_{n}}\left(V^{i,\varepsilon}(0)\right)\right)
	+\sum_{j\in J(n+1)}H_{ji}\right.\\
	&+\sqrt{\varepsilon}e^{-\gamma \tau^{\varepsilon}_{n+1}} \int_{\tau^{\varepsilon}_{n}}^{\tau^{\varepsilon}_{n+1}} e^{\gamma s} dW^i_s
	+\sum_{\ell=1}^n e^{-\gamma(\tau^{\varepsilon}_{n+1}-\tau_\ell^\varepsilon)}\sum_{j\in J(\ell)}H_{ji} \\
	&\left.+\sqrt{\varepsilon}e^{-\gamma \tau^{\varepsilon}_{n+1}} \int_0^{\tau^{\varepsilon}_n} e^{\gamma s} dW^i_s\right),
	\end{split}
	\end{equation*}
	and it immediately follows that the property is true at rank $n+1$ 
\end{proof}

\begin{lemma}\label{lemma minorantchaussette}
	Assume the support of the initial condition
	is included in \((\alpha,\theta)\).
	Let $n\geq\frac{\theta-\alpha}{m}$ then,
	\[
	\mathbb{P}\left(\max_{\ell\in\llbracket 1,n\rrbracket} \Card(J(\ell)) >  \frac{N}{n} \right)
	\geq
	1 - N\Phi\left(-m\sqrt{\frac{2\gamma}{\varepsilon}}\inf\left\{n - \frac{\theta-\alpha}{m},\frac{\beta-\theta}{m}\right\}\right) 
	\]
\end{lemma}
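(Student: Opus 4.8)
The plan is to lower bound \(\mathbb{P}(\max_{\ell}\Card(J(\ell)) > N/n)\) by the probability that \emph{every} neuron has fired at least once during the first \(n\) firings, and then to control the latter by a union bound. The reduction rests on a pigeonhole ("chaussette") count: if each of the \(N\) neurons belongs to some \(J(\ell)\) with \(\ell\le n\), then \(\sum_{\ell=1}^n\Card(J(\ell))\ge\Card(\cup_{\ell=1}^n J(\ell))=N\), so the largest of the \(n\) terms is at least \(N/n\). The strict inequality follows from the fact that the first firing \(J(1)\) is almost surely a singleton: if every \(\Card(J(\ell))\) were \(\le N/n\), then the sum being \(\ge N=n\cdot N/n\) would force each to equal \(N/n\), whence \(\Card(J(1))=N/n=1\); this contradicts \(N/n>1\), which holds in the regime \(n<N\) relevant here. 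Hence \(\{\text{all neurons fire within the first } n \text{ firings}\}\subseteq\{\max_\ell\Card(J(\ell))>N/n\}\), and it suffices to show \(\mathbb{P}(\exists\, i:\ i\notin\cup_{\ell=1}^n J(\ell))\le N\Phi(\cdots)\).

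First I would fix a neuron \(i\) and work on the event \(\{i\notin\cup_{\ell=1}^n J(\ell)\}\), on which Lemma~\ref{POTTAUN} gives the exact splitting \(V^{i,\varepsilon}(\tau_n^\varepsilon)=P_n+\tilde G^i_{\tau_n^\varepsilon}\), where \(P_n:=\phi_{\tau_n^\varepsilon}(V^{i,\varepsilon}(0))+\sum_{\ell=1}^n e^{-\gamma(\tau_n^\varepsilon-\tau_\ell^\varepsilon)}\sum_{j\in J(\ell)}H_{ji}\) is the drift-plus-kick part and \(\tilde G^i_t:=\sqrt{\varepsilon}e^{-\gamma t}\int_0^t e^{\gamma s}\,dW^i_s\) is the Ornstein--Uhlenbeck noise. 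Since \(i\) never fires, \(V^{i,\varepsilon}(\tau_n^\varepsilon)<\theta\), so the task reduces to bounding \(P_n\) from below by \(\theta+c\) for a deterministic \(c>0\), which forces \(\tilde G^i_{\tau_n^\varepsilon}<-c\).

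The lower bound on \(P_n\) is the deterministic heart of the argument. Using \(\phi_s(x+y)=\phi_s(x)+ye^{-\gamma s}\) and \(\phi_s\circ\phi_t=\phi_{s+t}\), the partial values satisfy \(P_\ell=\phi_{\tau_\ell^\varepsilon-\tau_{\ell-1}^\varepsilon}(P_{\ell-1})+\sum_{j\in J(\ell)}H_{ji}\). I would then invoke the two elementary monotonicities of the flow, namely \(\phi_s(x)\ge x\) for \(x\le\beta\) and \(\phi_s(x)\ge\beta\) for \(x\ge\beta\), together with \(\sum_{j\in J(\ell)}H_{ji}\ge m\,\Card(J(\ell))\ge m\). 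A short induction yields \(P_n\ge\min(V^{i,\varepsilon}(0)+nm,\beta)\ge\min(\alpha+nm,\beta)\): while the running value stays below the mean-reversion ceiling \(\beta\) each firing raises it by at least \(m\), and once it reaches \(\beta\) it can no longer be certified to grow. Consequently \(P_n-\theta\ge\min(nm-(\theta-\alpha),\beta-\theta)=m\inf\{n-\tfrac{\theta-\alpha}{m},\tfrac{\beta-\theta}{m}\}=:c\), which is exactly where the two competing terms in the announced infimum originate --- the first from accumulated kicks, the second as the cap imposed by the level \(\beta\).

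It remains to bound \(\mathbb{P}(\tilde G^i_{\tau_n^\varepsilon}<-c)\). For each fixed \(t\) the Gaussian \(\tilde G^i_t\) has variance \(\tfrac{\varepsilon}{2\gamma}(1-e^{-2\gamma t})\le\tfrac{\varepsilon}{2\gamma}\), so \(\mathbb{P}(\tilde G^i_t<-c)\le\Phi(-c\sqrt{2\gamma/\varepsilon})\), and summing over \(i\) produces the factor \(N\Phi(-m\sqrt{2\gamma/\varepsilon}\,\inf\{n-\tfrac{\theta-\alpha}{m},\tfrac{\beta-\theta}{m}\})\) of the statement. \textbf{The main obstacle is that \(\tau_n^\varepsilon\) is a random stopping time, not a fixed one}, and \(\tilde G^i\) is mean-reverting rather than a martingale, so the fixed-time tail bound cannot be applied verbatim. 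I would resolve this through the representation \(\tilde G^i_t=e^{-\gamma t}M^i_t\) with \(M^i_t=\sqrt{\varepsilon}\int_0^t e^{\gamma s}\,dW^i_s\) a continuous martingale: the event becomes \(\{M^i_{\tau_n^\varepsilon}<-c\,e^{\gamma\tau_n^\varepsilon}\}\), a crossing of a barrier growing at precisely the rate that keeps the normalized quantity Gaussian with the stationary variance \(\tfrac{\varepsilon}{2\gamma}\); after the time change \(M^i=B_{\langle M^i\rangle}\) this becomes a one-dimensional barrier-crossing estimate for Brownian motion, where I expect to lean on the Ornstein--Uhlenbeck large-deviation estimates collected in the Appendix to recover the sharp constant \(\Phi(-c\sqrt{2\gamma/\varepsilon})\).
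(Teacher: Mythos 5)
Your proposal reproduces the paper's own proof in all of its essential steps: the pigeonhole reduction \(\{\cup_{\ell=1}^n J(\ell)=I\}\subseteq\{\max_\ell\Card(J(\ell))\ge N/n\}\) followed by a union bound over the \(N\) neurons; the use of Lemma~\ref{POTTAUN} on the event \(\{i\notin\cup_{\ell=1}^n J(\ell)\}\), on which \(V^{i,\varepsilon}(\tau_n^\varepsilon)<\theta\); and the deterministic estimate \(P_n\ge\inf\{V^{i,\varepsilon}(0)+mn,\beta\}\ge\inf\{\alpha+mn,\beta\}\), which the paper gets in one line from \(\sum_{\ell=1}^n e^{-\gamma(\tau_n^\varepsilon-\tau_\ell^\varepsilon)}\sum_{j\in J(\ell)}H_{ji}\ge mne^{-\gamma\tau_n^\varepsilon}\), \(\phi_t(x)+ye^{-\gamma t}=\phi_t(x+y)\) and \(\phi_t(x)\ge\inf\{x,\beta\}\), and which you get by an induction on the firing index; the two computations are equivalent. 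One detail is factually wrong: your justification of the strict inequality \(>N/n\) rests on the claim that \(J(1)\) is a.s. a singleton. Only \(J^0(1)\), the set of spontaneous spikers, is a.s. a singleton; \(J(1)=\cup_p J^p(1)\) contains the whole interaction cascade and can equal \(I\) (that is precisely what synchronization means). The paper is itself loose on this point (its displayed implication yields only \(\Card(J(\ell))\ge N/n\)), and nothing downstream requires the strict version since Lemma~\ref{lemmaprobasynchrodechargebcp} is applied to the event \(\Card(J(\ell))\ge A\); but your repair of this cosmetic discrepancy is not a proof.

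The genuine gap is in the final step, which you flagged honestly but did not close. The event to bound is \(\{\tilde G^i_{\tau_n^\varepsilon}<-c\}\), with \(\tilde G^i_t=\sqrt{\varepsilon}e^{-\gamma t}\int_0^te^{\gamma s}\,dW^i_s\) and \(c=m\inf\{n-\tfrac{\theta-\alpha}{m},\tfrac{\beta-\theta}{m}\}\), where \(\tau_n^\varepsilon\) is a stopping time depending on all \(N\) Brownian motions. Your plan --- time-change \(M^i\) into a Brownian motion and view the event as the crossing of the barrier \(-c\sqrt{1+2\gamma u/\varepsilon}\), then conclude with the large deviation estimate of the Appendix --- cannot deliver the stated bound, for two reasons. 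First, if you majorize the probability at the stopping time by the probability that the barrier is \emph{ever} crossed, you get the trivial bound \(1\): the process \(\tilde G^i\) is a recurrent Ornstein--Uhlenbeck process, so \(\mathbb{P}(\inf_{t\ge 0}\tilde G^i_t<-c)=1\) for every \(c>0\) (equivalently, by the law of the iterated logarithm the time-changed Brownian motion a.s. crosses every square-root barrier). Hence, without an a priori control on \(\tau_n^\varepsilon\) --- which neither you nor the paper establishes --- the uniform-in-time route is vacuous. Second, Lemma~\ref{lemme_grandedevornstein} is an asymptotic statement (a limit of \(\varepsilon\log P_\varepsilon(\delta,T)\) as \(\varepsilon\to 0\)) on a fixed finite horizon \(T\); it cannot produce the non-asymptotic factor \(\Phi(-c\sqrt{2\gamma/\varepsilon})\) required by the lemma, and its rate depends on \(T\). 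For comparison, the paper handles this step by fiat: it applies the fixed-time comparison \eqref{eq comparaisonfnrepart}, with \(\sigma_0^2=\varepsilon/(2\gamma)\) the supremum of the variance over deterministic times, directly at \(t=\tau_n^\varepsilon\), i.e.\ it treats the stopping time as if it were deterministic, without comment. So your scruple identifies a real weakness in the argument, but your proposed resolution replaces an unjustified step by one that provably fails; as written, the proposal does not prove the lemma.
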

\begin{proof}
	Since, 
	\[
	\left(\bigcup_{\ell=1}^nJ(\ell) = I\right)\Longrightarrow\left(\exists \ell\in\llbracket 1,n\rrbracket\quad s.t. \quad \Card(J(\ell))\geq\frac{N}{n} \right),
	\]
	we have
	\begin{equation}\label{eq chaussettes}
	\mathbb{P}\left(\max_{\ell\in\llbracket 1,n\rrbracket} \Card(J(\ell)) >  \frac{N}{n} \right) \geq
	\mathbb{P}\left( \bigcup_{\ell=1}^nJ(\ell) = I \right).
	\end{equation}
	From Lemma~\ref{POTTAUN} we deduce that for any $i\in I$
	\begin{align*}
	\mathbb{P}\left(i\notin \bigcup_{\ell=1}^nJ(\ell)\right)
	&\leq
	\mathbb{P}\left(\phi_{\tau^\varepsilon_n}(V^{i,\varepsilon}(0))+\sum_{\ell=1}^n
	e^{-\gamma(\tau^\varepsilon_n-\tau_\ell^\varepsilon)}\sum_{j\in J(\ell)}H_{ji}+\sqrt{\varepsilon}e^{-\gamma \tau^\varepsilon_n} \int_0^{\tau^\varepsilon_n} e^{\gamma s} dW^i_s<\theta\right)\\
	&\leq
	\mathbb{P}\left(\phi_{\tau^\varepsilon_n}(V^{i,\varepsilon}(0))+mne^{-\gamma\tau^\varepsilon_n}+\sqrt{\varepsilon}e^{-\gamma \tau^\varepsilon_n} \int_0^{\tau^\varepsilon_n} e^{\gamma s} dW^i_s<\theta\right)\\
	&\leq
	\mathbb{P}\left(\phi_{\tau^\varepsilon_n}(V^{i,\varepsilon}(0)+mn)+\sqrt{\varepsilon}e^{-\gamma \tau^\varepsilon_n} \int_0^{\tau^\varepsilon_n} e^{\gamma s} dW^i_s<\theta\right)\\
	&\leq
	\mathbb{P}\left(\inf\{V^{i,\varepsilon}(0)+mn,\beta\}+\sqrt{\varepsilon}e^{-\gamma \tau^\varepsilon_n} \int_0^{\tau^\varepsilon_n} e^{\gamma s} dW^i_s<\theta\right).
	\end{align*}
	Now, remind that 
	\[
	\mathcal{L}\left(\sqrt{\varepsilon}e^{-\gamma t} \int_0^{t} e^{\gamma s} dW_s\right)=\mathcal{N}\left(0,\varepsilon\frac{1-e^{-2\gamma t}}{2\gamma}\right)\qquad\forall\, t\in\mathbb{R}.
	\]
	On the other hand, by monotonicity of the cumulative distribution
	function \(\Phi\), for all \(a \geq a_0\) and \(\sigma^2 \leq \sigma_0^2\), we have
	\begin{equation}\label{eq comparaisonfnrepart}
	\forall x \leq a, \quad  \mathbb{P}\left(\mathcal{N}(a,\sigma^2)  < x\right) \leq \Phi\left(\frac{x-a_0}{\sigma_0}\right).
	\end{equation}
	Then, with \(a_0 = \inf\{\alpha+mn,\beta\}\), \(a = \inf\{V^{i,\varepsilon}(0)+mn,\beta\}\), \(x = \theta\), 
	\(\sigma_0 = \sqrt{\frac{\varepsilon}{2\gamma}}\) and \(\sigma = 
	\sqrt{\frac{1-e^{-2\gamma t}}{2\gamma}}\), 
	if $\inf\{\alpha+mn,\beta\}\geq \theta$, we have 
	\begin{align*}
	\mathbb{P}\left(i\notin \bigcup_{\ell=1}^nJ(\ell)\right)
	&\leq
	\Phi\left(\frac{\theta-\inf\{\alpha+mn,\beta\}}
	{\sqrt{\frac{\varepsilon}{2\gamma}}}\right) \\
	&= \Phi\left(-m\sqrt{\frac{2\gamma}{\varepsilon}}\inf\left\{n - \frac{\theta-\alpha}{m},\frac{\beta-\theta}{m}\right\}\right) .
	\end{align*}
	The result follows then from \eqref{eq chaussettes} and
	\[
	\mathbb{P}\left(\bigcup_{\ell=1}^nJ(\ell)\neq I\right)
	\leq\sum_{i=1}^N\mathbb{P}\left(i\notin \bigcup_{\ell=1}^nJ(\ell)\right).
	\]
\end{proof}

\begin{lemma}\label{lemmaqueuededistribution}
	For all \(n\geq 0\), \(t\geq \tau_n\), \(i\in\llbracket 1,N\rrbracket\), 
	\begin{equation}\label{eq inegalitelemme31}
	\forall C \geq  \theta - \alpha, \quad
	\mathbb{P}\left(\tilde{V}^{i,\varepsilon}_n(t) + C < \theta\right)
	\leq  \Phi\left((\theta-\alpha-C)\sqrt{\dfrac{2\gamma}{\varepsilon}}\right) + n\Phi\left(-m\sqrt{\frac{2\gamma}{\varepsilon}}\right).
	\end{equation}
\end{lemma}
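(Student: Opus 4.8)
The plan is to bound $\mathbb{P}\left(\tilde{V}^{i,\varepsilon}_n(t) + C < \theta\right)$ by a downward induction on $n$ that removes one firing time at a time, the two elementary inputs being the semigroup identity $\phi_s(x+y)=\phi_s(x)+ye^{-\gamma s}$ and the fact that, by \eqref{SOLSUBTHPOT}, the Gaussian part of $\tilde{V}^{i,\varepsilon}_n(t)$ has variance $\varepsilon\tfrac{1-e^{-2\gamma(t-\tau_n^\varepsilon)}}{2\gamma}\le\tfrac{\varepsilon}{2\gamma}$. The first step is to split the event according to whether neuron $i$ fires at the last firing time $\tau_n^\varepsilon$:
\[
\mathbb{P}\!\left(\tilde{V}^{i,\varepsilon}_n(t)+C<\theta\right)=\mathbb{P}\!\left(\{\tilde{V}^{i,\varepsilon}_n(t)+C<\theta\}\cap\{i\notin J(n)\}\right)+\mathbb{P}\!\left(\{\tilde{V}^{i,\varepsilon}_n(t)+C<\theta\}\cap\{i\in J(n)\}\right).
\]

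For the first term I would establish the pathwise comparison $\tilde{V}^{i,\varepsilon}_n(t)\ge\tilde{V}^{i,\varepsilon}_{n-1}(t)$ on $\{i\notin J(n)\}$. Indeed, by \eqref{FIRE!}, on $\{i\notin J(n)\}$ one has $V^{i,\varepsilon}(\tau_n^\varepsilon)=V^{i,\varepsilon}(\tau_n^\varepsilon-)+\sum_{j\in J(n)}H_{ji}\ge \tilde{V}^{i,\varepsilon}_{n-1}(\tau_n^\varepsilon)$ since every $H_{ji}>0$. Feeding this into \eqref{SOLSUBTHPOT}, using that $\phi$ is increasing in its argument together with $\phi_{t-\tau_n^\varepsilon}\!\left(\phi_{\tau_n^\varepsilon-\tau_{n-1}^\varepsilon}(x)+y\right)=\phi_{t-\tau_{n-1}^\varepsilon}(x)+ye^{-\gamma(t-\tau_n^\varepsilon)}$, and recombining the Itô integrals over $[\tau_{n-1}^\varepsilon,\tau_n^\varepsilon]$ and $[\tau_n^\varepsilon,t]$ into one over $[\tau_{n-1}^\varepsilon,t]$ exactly as in the proof of Lemma~\ref{POTTAUN}, yields the claimed comparison. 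Consequently this term is at most $\mathbb{P}\!\left(\tilde{V}^{i,\varepsilon}_{n-1}(t)+C<\theta\right)$, i.e. the inductive quantity at rank $n-1$.

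For the second term I would condition on $\mathcal{F}_{\tau_n^\varepsilon}$. On $\{i\in J(n)\}$ we have $V^{i,\varepsilon}(\tau_n^\varepsilon)=V_r$, so by \eqref{SOLSUBTHPOT} and $\phi_{t-\tau_n^\varepsilon}(V_r)\ge V_r$ the event $\{\tilde{V}^{i,\varepsilon}_n(t)+C<\theta\}$ forces the post-firing increment $\sqrt{\varepsilon}\,e^{-\gamma t}\int_{\tau_n^\varepsilon}^{t}e^{\gamma s}\,dW^i_s$ to be below $\theta-C-V_r$, a threshold which is at most $\alpha-V_r\le-m$ because $C\ge\theta-\alpha$. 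Given $\mathcal{F}_{\tau_n^\varepsilon}$ this increment is a centered Gaussian of variance $\le\varepsilon/(2\gamma)$, independent of the $\mathcal{F}_{\tau_n^\varepsilon}$-measurable event $\{i\in J(n)\}$, so the monotonicity \eqref{eq comparaisonfnrepart} bounds its conditional probability by $\Phi\!\left(-m\sqrt{2\gamma/\varepsilon}\right)$; integrating gives $\mathbb{P}\!\left(\{\tilde{V}^{i,\varepsilon}_n(t)+C<\theta\}\cap\{i\in J(n)\}\right)\le\Phi\!\left(-m\sqrt{2\gamma/\varepsilon}\right)\mathbb{P}(i\in J(n))\le\Phi\!\left(-m\sqrt{2\gamma/\varepsilon}\right)$. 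Combining the two terms yields the one-step recursion $\mathbb{P}(\tilde{V}^{i,\varepsilon}_n(t)+C<\theta)\le\mathbb{P}(\tilde{V}^{i,\varepsilon}_{n-1}(t)+C<\theta)+\Phi(-m\sqrt{2\gamma/\varepsilon})$, and iterating down to $n=0$ produces the summand $n\Phi(-m\sqrt{2\gamma/\varepsilon})$. For the base case, $\tilde{V}^{i,\varepsilon}_0(t)=\phi_t(V^{i,\varepsilon}(0))+\sqrt{\varepsilon}\,e^{-\gamma t}\int_0^t e^{\gamma s}dW^i_s$ with $\phi_t(V^{i,\varepsilon}(0))\ge\alpha$ (since $V^{i,\varepsilon}(0)>\alpha$ and $\phi$ increases toward $\beta>\theta$), so \eqref{eq comparaisonfnrepart} with $a_0=\alpha+C$, $\sigma_0=\sqrt{\varepsilon/(2\gamma)}$ and $x=\theta\le a_0$ gives $\mathbb{P}(\tilde{V}^{i,\varepsilon}_0(t)+C<\theta)\le\Phi\!\left((\theta-\alpha-C)\sqrt{2\gamma/\varepsilon}\right)$, the first summand of \eqref{eq inegalitelemme31}.

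The delicate point I expect to be the main obstacle is the per-firing estimate of the second term: one must correctly localize the reset to $V_r$ and, above all, justify that conditionally on $\mathcal{F}_{\tau_n^\varepsilon}$ the post-firing Ornstein--Uhlenbeck increment is an honest centered Gaussian of variance $\le\varepsilon/(2\gamma)$ even though $\tau_n^\varepsilon$ is a stopping time — this is what makes the bound $\Phi(-m\sqrt{2\gamma/\varepsilon})$ legitimate rather than a formal substitution of a deterministic time. I would also verify that the inequality $\theta-C-V_r\le-m$ used above is guaranteed by the standing hypotheses (it holds as soon as the reset value $V_r$ lies at least $m$ above the lowest admissible potential $\alpha$); should this fail, the sharper threshold $\alpha-V_r$ would replace $-m$ in the estimate.
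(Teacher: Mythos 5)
Your decomposition is genuinely different from the paper's, and most of it is sound: the base case $n=0$ coincides with the paper's, and the pathwise comparison $\tilde{V}^{i,\varepsilon}_n(t)\geq \tilde{V}^{i,\varepsilon}_{n-1}(t)$ on $\{i\notin J(n)\}$ (via \eqref{FIRE!}, monotonicity of $\phi$ and the flow property) is a correct and clean way to handle the non-firing case. The genuine gap is in the firing case. On $\{i\in J(n)\}$ your Gaussian bound produces the threshold $\theta-C-V_r\leq \alpha-V_r$, hence the estimate $\Phi\bigl((\alpha-V_r)\sqrt{2\gamma/\varepsilon}\bigr)$, and turning this into the summand $\Phi\bigl(-m\sqrt{2\gamma/\varepsilon}\bigr)$ of \eqref{eq inegalitelemme31} requires $V_r\geq\alpha+m$. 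Nothing in the paper's standing assumptions gives this: the model only assumes $V_r<\theta$, $\beta>\theta$ and initial data in $[\alpha,\theta)$; the reset potential is never tied to $\alpha$ through the synaptic weight $m$. Your fallback --- replacing $-m$ by $\alpha-V_r$ --- proves a different and generally weaker inequality: if $V_r$ is close to $\alpha$, each firing costs nearly $\Phi(0)=1/2$ and the bound becomes useless, whereas the inequality \eqref{eq inegalitelemme31} as stated (with the constant $m$, which is what Lemma~\ref{lemmaprobasynchrodechargebcp} and Theorem~\ref{th:mainresultsynch} consume downstream) is still true in that regime. So, under the paper's hypotheses, your argument does not establish the statement.

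The reason the paper avoids this is that its induction splits on a different event: the position of the post-firing potential $\tilde{V}^{i,\varepsilon}_n(\tau_n)=V^{i,\varepsilon}(\tau_n)$ relative to $\alpha$, not on whether neuron $i$ fires. The main Gaussian term $\Phi\bigl((\theta-\alpha-C)\sqrt{2\gamma/\varepsilon}\bigr)$ is paid once, conditionally on $\{V^{i,\varepsilon}(\tau_n)\geq\alpha\}$; the error term is $\mathbb{P}\bigl(V^{i,\varepsilon}(\tau_n)<\alpha\bigr)$, estimated recursively as follows: a neuron that fires is reset to $V_r\geq\alpha$ and contributes \emph{zero}, while a neuron that does not fire satisfies $V^{i,\varepsilon}(\tau_n)\geq \tilde{V}^{i,\varepsilon}_{n-1}(\tau_n)+m$ because of the excitatory kick, so falling below $\alpha$ forces the Ornstein--Uhlenbeck increment below $-m$, at cost $\Phi\bigl(-m\sqrt{2\gamma/\varepsilon}\bigr)$ per step. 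In other words, $m$ enters the lemma through the kick received by the \emph{non-firing} neurons, not as a cushion of $V_r$ above $\alpha$; your recursion places the reset event in the error term, where $m$ has no legitimate role, and that is exactly where the extra hypothesis sneaks in. (The paper's proof does implicitly use $V_r\geq\alpha$ --- without which the lemma is actually false --- but that is strictly weaker than your $V_r\geq\alpha+m$. The stopping-time subtlety you flag is real, but it is shared and equally glossed over by the paper, e.g.\ when bounding $\mathbb{P}(\tilde{V}^{i,\varepsilon}_0(\tau_1)+m<\alpha)$ and when applying the lemma at $t=\tau_\ell$, so it is not what separates the two arguments.)
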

\begin{proof}
	We start with \(n=0\). For all \(t\), conditionally to
	the initial value \(V^{i,\varepsilon}(0)\), \(\tilde{V}^{i,\varepsilon}_0(t)\)
	has a gaussian distribution with mean 
	\[
	\phi_t(V^{i,\varepsilon}(0))
	\]
	and variance
	\[
	\frac{\varepsilon(1-\exp(-2\gamma t))}{2\gamma}.
	\]
	The proof of \eqref{eq inegalitelemme31} for \(n=0\) is ended by using \eqref{eq comparaisonfnrepart} and an integration with respect to the initial law 
	\(V^{i,\varepsilon}(0)\).
	\[
	\begin{split}
	\mathbb{P}\left(\tilde{V}^{i,\varepsilon}_1(t) + C < \theta\right) =&
	\mathbb{P}\left(\tilde{V}^{i,\varepsilon}_1(t) + C < \theta,\tilde{V}^{i,\varepsilon}_1(\tau_1)\geq \alpha\right)\\
	&+ \mathbb{P}\left(\tilde{V}^{i,\varepsilon}_1(t) + C < \theta,\tilde{V}^{i,\varepsilon}_1(\tau_1) < \alpha\right).
	\end{split}
	\]
	Using conditional probability, an upper bound on the first term is  
	\[
	\Phi\left((\theta-\alpha-C)\sqrt{\dfrac{2\gamma}{\varepsilon}}\right).
	\]
	Furthermore,
	\[
	\begin{split}
	\mathbb{P}\left(\tilde{V}^{i,\varepsilon}_1(t) + C < \theta,\tilde{V}^{i,\varepsilon}_1(\tau_1) < \alpha\right) \leq  &\mathbb{P}\left(\tilde{V}^{i,\varepsilon}_1(\tau_1) < \alpha\right)\\
	\leq  &\mathbb{P}\left(\tilde{V}^{i,\varepsilon}_1(\tau_1) < \alpha,i\in J(1)\right) \\
	&+ \mathbb{P}\left(\tilde{V}^{i,\varepsilon}_1(\tau_1) < \alpha,i\notin J(1)\right)\\
	\leq  &\mathbb{P}\left(\tilde{V}^{i,\varepsilon}_0(\tau_1) + m < \alpha,
	i\notin J(1)\right)\\
	\leq  &\mathbb{P}\left(\tilde{V}^{i,\varepsilon}_0(\tau_1) + m < \alpha\right)\\
	\leq & \Phi\left(-m\sqrt{\frac{2\gamma}{\varepsilon}}\right).
	\end{split}
	\]
	So, for \(n=1\),
	\[
	\mathbb{P}\left(\tilde{V}^{i,\varepsilon}_1(t) + C < \theta\right)
	\leq 
	\Phi\left((\theta-\alpha-C)\sqrt{\dfrac{2\gamma}{\varepsilon}}\right)
	+\Phi\left(-m\sqrt{\frac{2\gamma}{\varepsilon}}\right).
	\]
	To conclude, we use a straightforward induction on \(n\), splitting cases 
	according to the position of \(\tilde{V}^{i,\varepsilon}_n(\tau_n)\)
	with respect to \(\alpha\).
\end{proof}
\begin{lemma}\label{lemmaprobasynchrodechargebcp}
	Let \(A\) be  in \([\frac{\theta - \alpha}{m}, N-1]\).
	\begin{multline}\label{eq:09012015}
	\forall \ell\geq 1,\quad \mathbb{P}\left(\Card(J(\ell))=N	\middle|\Card(J(\ell)) \geq A\right)
	\geq \\
	\left(1  - \Phi\left((\theta-\alpha-mA)\sqrt{\dfrac{2\gamma}{\varepsilon}}\right) - \ell\Phi\left(-m\sqrt{\frac{2\gamma}{\varepsilon}}\right)\right)^N
	\end{multline}
\end{lemma}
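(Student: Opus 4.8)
The plan is to reduce the conditional synchronization probability to a pathwise sufficient condition, estimate that condition neuron by neuron with Lemma~\ref{lemmaqueuededistribution}, and finally assemble the single-neuron bounds into the product $(1-q)^N$ by exploiting the independence of the Brownian motions $W^1,\dots,W^N$.

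First I would prove a deterministic implication. Set $B:=\{\Card(J(\ell))\geq A\}$ and, for $i\in I$, $G_i:=\{V^{i,\varepsilon}(\tau_\ell^\varepsilon-)\geq\theta-mA\}$. I claim $B\cap\bigcap_{i\in I}G_i\subseteq\{J(\ell)=I\}$. Indeed, on this event suppose some $i\notin J(\ell)$; then termination of the firing cascade at $\tau_\ell^\varepsilon$ forces $V^{i,\varepsilon}(\tau_\ell^\varepsilon-)+\sum_{j\in J(\ell)}H_{ji}<\theta$, while $H_{ji}\geq m$ and $\Card(J(\ell))\geq A$ give $\sum_{j\in J(\ell)}H_{ji}\geq mA$, so that $V^{i,\varepsilon}(\tau_\ell^\varepsilon-)<\theta-mA$, contradicting $G_i$. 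Since $\{J(\ell)=I\}\subseteq B$, this yields $\mathbb{P}(J(\ell)=I\mid B)\geq\mathbb{P}\bigl(\bigcap_{i}G_i\mid B\bigr)$, and it suffices to bound the right-hand side below by $(1-q)^N$ with $q:=\Phi\bigl((\theta-\alpha-mA)\sqrt{2\gamma/\varepsilon}\bigr)+\ell\,\Phi\bigl(-m\sqrt{2\gamma/\varepsilon}\bigr)$.

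For the per-neuron input, note $V^{i,\varepsilon}(\tau_\ell^\varepsilon-)=\tilde V^{i,\varepsilon}_{\ell-1}(\tau_\ell^\varepsilon)$. Since $A\geq\frac{\theta-\alpha}{m}$, the constant $C:=mA$ satisfies $C\geq\theta-\alpha$, so Lemma~\ref{lemmaqueuededistribution}, applied with $n=\ell-1$ and $t=\tau_\ell^\varepsilon\geq\tau_{\ell-1}^\varepsilon$ (its bound being uniform in $t$, hence insensitive to the randomness of $\tau_\ell^\varepsilon$), gives $\mathbb{P}(G_i^{\mathrm c})=\mathbb{P}\bigl(\tilde V^{i,\varepsilon}_{\ell-1}(\tau_\ell^\varepsilon)+mA<\theta\bigr)\leq q$, i.e. $\mathbb{P}(G_i)\geq1-q$ for each $i$.

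The heart of the argument, and the step I expect to be the main obstacle, is passing from these single-neuron estimates to the \emph{product} $(1-q)^N=\prod_i\mathbb{P}(G_i)$ rather than the weaker union bound $1-Nq$ (recall $(1-q)^N\geq1-Nq$). Here I would condition on $\mathcal F_{\tau^\varepsilon_{\ell-1}}$, on the value of $\tau_\ell^\varepsilon$ and on the identity $i_0$ of the spontaneously firing neuron. On $(\tau_{\ell-1}^\varepsilon,\tau_\ell^\varepsilon)$ no reset occurs, so each $\tilde V^{j,\varepsilon}_{\ell-1}$ is a functional of the single path $W^j$ alone; as the $W^j$ are independent and the firing configuration $\{\tau_\ell^\varepsilon=t,\ \text{trigger }i_0\}$ is a coordinatewise (product) event in the first-passage times, the potentials $(V^{j,\varepsilon}(\tau_\ell^\varepsilon-))_{j\neq i_0}$ are conditionally independent. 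Under this product law the events $G_j$ and $B$ are increasing in the driving noise, so an FKG inequality gives $\mathbb{P}(\bigcap_jG_j\mid B)\geq\prod_j\mathbb{P}(G_j\mid B)\geq\prod_j\mathbb{P}(G_j)$, and integrating back over $t$ and $i_0$ returns $\mathbb{P}(\bigcap_iG_i\mid B)\geq(1-q)^N$. The genuinely delicate point is that conditioning on $\tau_\ell^\varepsilon$ forces each non-triggering neuron to have stayed below $\theta$ up to $\tau_\ell^\varepsilon$, and one must check that this survival conditioning does not degrade the per-neuron estimate below $1-q$; this is where the inequality $\theta-mA\leq\alpha$ (equivalently $C\geq\theta-\alpha$) is essential, since it places the failure event $G_j^{\mathrm c}$ deep below the absorbing level and lets the uniform estimate of Lemma~\ref{lemmaqueuededistribution} absorb the effect of the conditioning.
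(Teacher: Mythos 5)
Your reduction and your per-neuron estimate are exactly the paper's own first two steps: the cascade-termination argument giving $B\cap\bigcap_{i}G_i\subseteq\{J(\ell)=I\}$, and the application of Lemma~\ref{lemmaqueuededistribution} with $C=mA\geq\theta-\alpha$. For the remaining step the paper offers only one sentence --- it invokes ``the conditional independence of $(\tilde V^{i,\varepsilon}_{\ell-1}(t)-\tilde V^{i,\varepsilon}_{\ell-1}(\tau^\varepsilon_{\ell-1}))_{t\geq\tau^\varepsilon_{\ell-1}}$ given $\tilde V^{i,\varepsilon}_{\ell-1}(\tau^\varepsilon_{\ell-1})$'' --- so your third step is an attempt to prove what the paper merely asserts. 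Your observation that $\{\tau^\varepsilon_\ell=t,\ i_0\ \text{triggers}\}$ is a product event in the coordinates $W^1,\dots,W^N$, so that the non-triggering potentials are conditionally independent, is correct and is the right starting point for such a proof.

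However, the argument you build on it has two holes. The smaller, repairable one: the inequality $\mathbb{P}(\bigcap_j G_j\mid B)\geq\prod_j\mathbb{P}(G_j\mid B)$ is false in general, because a product measure conditioned on an increasing event need not stay positively associated. (Two independent fair coins, $B=\{\text{at least one head}\}$, $G_i=\{\text{coin } i \text{ is a head}\}$: the left-hand side is $1/3$, the right-hand side $4/9$.) The fix is to apply Harris' inequality once, under the conditional product law, to the two increasing events $\bigcap_j G_j$ and $B$, which gives $\mathbb{P}(\bigcap_j G_j\mid B,\cdot)\geq\prod_j\mathbb{P}(G_j\mid\cdot)$ directly. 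The serious hole is the one you flag yourself and then wave away: after conditioning on $\{\tau^\varepsilon_\ell=t,\ i_0\ \text{triggers}\}$, each factor $\mathbb{P}(G_j\mid\cdot)$ concerns an Ornstein--Uhlenbeck process conditioned to stay below $\theta$ on the whole interval, and this conditioning shifts the law of $\tilde V^{j,\varepsilon}_{\ell-1}(t)$ \emph{downward} (the $h$-transform adds a drift pointing away from the barrier), i.e.\ in the direction unfavourable to $G_j$. Lemma~\ref{lemmaqueuededistribution} is stated and proved for deterministic times and unconditional Gaussian laws, so it says nothing about these factors; your closing claim that $\theta-mA\leq\alpha$ ``lets the uniform estimate absorb the effect of the conditioning'' is precisely the missing estimate, asserted rather than proved, and it is not a soft fact: for $t$ beyond the typical firing time the conditional law is governed by the quasi-stationary distribution of the killed process and needs a separate analysis (alternatively, one could replace $G_j$ by the path event that neuron $j$ stays above $\theta-mA$ until its own hitting time of $\theta$, which genuinely is independent across neurons, but then $B$ is no longer a monotone functional and the Harris step must be reworked). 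The same objection already undermines your earlier line that $\mathbb{P}(G_i^{\mathrm{c}})\leq q$ ``because the bound is uniform in $t$'': uniformity over deterministic times does not justify substituting a random time that depends on $W^i$ itself. To be fair, the paper's one-sentence appeal to conditional independence glosses over exactly the same points; but judged as a self-contained argument, your proposal leaves the decisive estimate unproved.
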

\begin{proof}
	Given \(\Card(J(\ell)) \geq A\), a sufficient condition to have 
	\(\Card(J(\ell)) = N\) is 
	\[
	\tilde{V}^{i,\varepsilon}_{\ell-1}(\tau_\ell)+mA \geq \theta, \forall i \in \llbracket 1,N\rrbracket.
	\] 
	On the one hand, assuming \(A \geq \frac{\theta-\alpha}{m}\), we  apply Lemma~\ref{lemmaqueuededistribution} with \(C=Am\). On the other hand,
	using the conditional independence of \((\tilde{V}^{i,\varepsilon}_{\ell-1}(t)-\tilde{V}^{i,\varepsilon}_{\ell-1}(\tau_{\ell-1}))_{t\geq \tau_{\ell-1}}\), 
	given \(\tilde{V}^{i,\varepsilon}_{\ell-1}(\tau_{\ell-1})\), we obtain \eqref{eq:09012015}.
\end{proof}
Since \(N \geq \frac{\theta-\alpha}{m}(\frac{\theta-\alpha}{m}+2)\), there are integers in \([\frac{\theta-\alpha}{m},\frac{mN}{\theta-\alpha}]\).
For 
such an integer \(n\),
we can apply Lemma~\ref{lemmaprobasynchrodechargebcp} with 
\(A = \frac{N}{n}\) and Lemma~\ref{lemma minorantchaussette}
to obtain
\begin{multline*}
\mathbb{P}\left(\sup_{\ell \in\llbracket 1,n\rrbracket} \Card(J(\ell)) = N\right)\geq\\
\left(1 - N\Phi\left(-m\sqrt{\dfrac{2\gamma}{\varepsilon}} \inf\left\{n-\frac{\theta-\alpha}{m} ,\frac{\beta-\theta}{m}\right\}\right) \right)\times\\
\left(1  - \Phi\left(-m\sqrt{\dfrac{2\gamma}{\varepsilon}}\left(\frac{N}{n} - \frac{\theta - \alpha}{m}\right)\right) - n\Phi\left(-m\sqrt{\dfrac{2\gamma}{\varepsilon}}\right)\right)^N.
\end{multline*}

\subsection{Proof of the Result of Stability (Th.~\ref{THSTABSYNCHRO})}\label{PROOFSTABSYNCHRO}
Firstly, we know that the system has the homogeneous Markov property, so 
\[
\left\{V^{i,\varepsilon}(\tau^{\varepsilon}_{n+1})=V_r,\forall\,i\in I 
\right\}\mbox{ conditionned by }
\left\{V^{i,\varepsilon}(\tau^{\varepsilon}_n)=V_r,\forall\,i\in I\right\}
\]
have the same probability for any \(n\). Thus, we prove the result of Theorem~\ref{THSTABSYNCHRO} for \(n=0\), that is we consider
a system of \(N\) neurons initially synchronized, that is
\[
\forall i \in \llbracket 1,N\rrbracket, \quad V^{i,\varepsilon}(0)=V_r.
\]
We estimate in this section
a lower bound for the probability to stay synchronized at the next
spiking time of the network \(\tau^\varepsilon_1\) (see \eqref{eq:definitiontaun} and 
\eqref{eq:evolutiondevtilde}).

The main tool of the proof of  Theorem~\ref{THSTABSYNCHRO}
is the Large Deviation Principle for an Ornstein Uhlenbeck
process. We give an explicit expression of the good rate function in
the following Lemma. 
\begin{lemma}\label{lemme_grandedevornstein}
	Let \(x\)  be the solution of the linear equation
	and \(X^\varepsilon\) the solution of the associated noisy
	dynamic.
	\begin{align}
	X^{\varepsilon}(t) & =  x_{0}+\int_{0}^{t}K-\gamma X^{\varepsilon}(s)ds+\sqrt{\varepsilon}\, W_{t},\\
	x(t) & =  x_{0} +\int_{0}^{t} K - \gamma  x(s)ds.
	\end{align}
	For any fixed level $\delta > 0$ and finite time \(T\), let
	\begin{equation}\label{eq:notationgrandedev}
	P_\varepsilon(\delta,T):=\mathbb{P}\left(\sup_{0\leq s\leq T}\left|X^{\varepsilon}(s)-x(s)\right|\geq\delta\right).
	\end{equation}
	We have
	\[
	\lim_{\varepsilon\rightarrow0}\varepsilon\log P_\varepsilon(\delta,T)
	=-\frac{\gamma\delta^{2}}{2}\left(1+\frac{\cosh\left(\gamma T\right)}{\sinh\left(\gamma T\right)}\right) < -\gamma\delta^{2}.
	\]
\end{lemma}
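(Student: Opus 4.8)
The plan is to reduce the statement to the tail behaviour of a single fixed Gaussian process. Setting $Y^\varepsilon(s) := X^\varepsilon(s) - x(s)$, subtracting the two defining relations shows that $Y^\varepsilon$ solves the linear SDE $dY^\varepsilon(s) = -\gamma Y^\varepsilon(s)\,ds + \sqrt{\varepsilon}\,dW_s$ with $Y^\varepsilon(0)=0$, so, exactly as in the explicit form \eqref{SOLSUBTHPOT},
\[
Y^\varepsilon(s) = \sqrt{\varepsilon}\,e^{-\gamma s}\int_0^s e^{\gamma r}\,dW_r = \sqrt{\varepsilon}\,U(s),
\]
where $U(s) := e^{-\gamma s}\int_0^s e^{\gamma r}\,dW_r$ is a centred Gaussian process independent of $\varepsilon$, with variance $v(s)=\frac{1-e^{-2\gamma s}}{2\gamma}$. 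Writing $M := \sup_{0\le s\le T}|U(s)|$, this identifies $P_\varepsilon(\delta,T)=\mathbb{P}(M\ge \delta/\sqrt{\varepsilon})$, so the whole task becomes computing $\lim_{a\to\infty}a^{-2}\log\mathbb{P}(M\ge a)$; indeed, with $a=\delta/\sqrt{\varepsilon}$ one has $\varepsilon\log P_\varepsilon(\delta,T)=\delta^2\,a^{-2}\log\mathbb{P}(M\ge a)$.

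The constant is governed by the maximal variance $\sigma_T^2 := \sup_{0\le s\le T}v(s)$. Since $v'(s)=e^{-2\gamma s}>0$, the variance is strictly increasing and the supremum is attained at the endpoint, $\sigma_T^2=v(T)=\frac{1-e^{-2\gamma T}}{2\gamma}$; this monotonicity is the step that pins down the correct constant. For the lower bound I would discard the supremum and keep only the endpoint, $M\ge |U(T)|$, giving $\mathbb{P}(M\ge a)\ge 2\Phi(-a/\sigma_T)$, so the standard Gaussian tail asymptotics $\log\Phi(-y)\sim -y^2/2$ yield $\liminf_{\varepsilon\to 0}\varepsilon\log P_\varepsilon(\delta,T)\ge -\delta^2/(2\sigma_T^2)$. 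For the matching upper bound I would invoke the Borell--Tsirelson--Ibragimov--Sudakov concentration inequality for the supremum of a Gaussian process: for $a>\mathbb{E}[M]$,
\[
\mathbb{P}(M\ge a)\le 2\exp\!\left(-\frac{(a-\mathbb{E}[M])^2}{2\sigma_T^2}\right),
\]
with $\mathbb{E}[M]<\infty$ a fixed constant (Fernique). Taking logarithms, multiplying by $\varepsilon$ and letting $a=\delta/\sqrt{\varepsilon}\to\infty$ makes both the factor $2$ and the shift $\mathbb{E}[M]$ asymptotically negligible, so $\limsup_{\varepsilon\to 0}\varepsilon\log P_\varepsilon(\delta,T)\le -\delta^2/(2\sigma_T^2)$.

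Combining the two bounds gives
\[
\lim_{\varepsilon\to 0}\varepsilon\log P_\varepsilon(\delta,T) = -\frac{\delta^2}{2\sigma_T^2} = -\frac{\gamma\delta^2}{1-e^{-2\gamma T}},
\]
and the elementary hyperbolic identity $\frac{1}{1-e^{-2\gamma T}}=\frac12\bigl(1+\frac{\cosh(\gamma T)}{\sinh(\gamma T)}\bigr)$ rewrites this in the stated form; the strict bound $<-\gamma\delta^2$ follows since $1-e^{-2\gamma T}<1$ for finite $T$. The main obstacle is the upper bound: one must rule out that the supremum over $[0,T]$ is exponentially cheaper than hitting the level at the single most favourable instant, and this is precisely what Borell--TIS combined with the monotonicity of $v$ supplies. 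An alternative route, presumably the one developed in the Appendix, is to establish the Freidlin--Wentzell sample-path large deviation principle for $Y^\varepsilon$ with good rate function $I(\psi)=\frac12\int_0^T(\dot\psi(s)+\gamma\psi(s))^2\,ds$ on $\{\psi(0)=0\}$, and then minimise $I$ over $\{\sup_{[0,T]}|\psi|\ge\delta\}$; solving this quadratic control problem gives optimal cost $\gamma\delta^2/(1-e^{-2\gamma t^\ast})$ for reaching $\pm\delta$ at time $t^\ast$, which is decreasing in $t^\ast$ and hence minimised at $t^\ast=T$, recovering the same constant.
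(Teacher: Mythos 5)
Your proof is correct, but it takes a genuinely different route from the paper's. The paper proves the lemma in the Appendix via Freidlin--Wentzell theory: it invokes the sample-path large deviation principle for $X^\varepsilon$ (citing Dembo--Zeitouni) with good rate function $I_{x_0}(f)=\frac12\int_0^T|f'(t)-b(f(t))|^2\,dt$, and then solves the variational problem $\inf\{I_{x_0}(f):\sup_{[0,T]}|f-x|\ge\delta\}$ explicitly: writing $\varphi=f-x$, truncating at the first time $T_1$ at which $|\varphi(T_1)|=\delta$, integrating $2\gamma\varphi\varphi'$ exactly to produce the additive term $\gamma\delta^2$, and identifying the minimizer $\varphi_{T_1}(t)=\delta\sinh(\gamma t)/\sinh(\gamma T_1)$ by an orthogonality argument, whose cost is decreasing in $T_1$ and hence minimal at $T_1=T$. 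You instead exploit the linearity of the drift: the error $Y^\varepsilon=X^\varepsilon-x$ is exactly $\sqrt{\varepsilon}$ times a fixed centred Gaussian process $U$, so the lemma reduces to sup-tail asymptotics for $U$, handled by a single-point lower bound at the maximal-variance instant $s=T$ and the Borell--TIS upper bound; the constants agree through the identity $\frac{1}{1-e^{-2\gamma T}}=\frac12\bigl(1+\coth(\gamma T)\bigr)$, which you verify correctly, as are the monotonicity $v'(s)=e^{-2\gamma s}>0$ and the applicability of Borell--TIS to $\sup_{[0,T]}|U|$ (viewed as the supremum of the centred family $\{\pm U(s)\}$, with $\mathbb{E}[\sup|U|]<\infty$ since $U$ has continuous paths on a compact interval). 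Your route is more elementary --- no sample-path LDP, no calculus of variations --- and it is non-asymptotic: the Borell--TIS and Gaussian-tail bounds hold for each fixed $\varepsilon$, which would in principle yield an explicit $\varepsilon_0$ in Theorem~\ref{THSTABSYNCHRO} rather than an unquantified ``for $\varepsilon$ small enough''. What it gives up is generality and structural information: it works only because the drift is affine (so that $Y^\varepsilon$ is exactly Gaussian), whereas the Freidlin--Wentzell argument applies to any Lipschitz drift and also exhibits the optimal fluctuation path; your closing sketch of that variational alternative, including the cost $\gamma\delta^2/(1-e^{-2\gamma t^\ast})$ decreasing in $t^\ast$, is precisely the computation the paper carries out.
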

\begin{proof}
	See Appendix.
\end{proof}

Now we finish the proof of the Theorem~\ref{THSTABSYNCHRO}. A suffisant condition for the network to stay synchronized at 
\(\tau^\varepsilon_1\) is
\begin{equation}\label{eq:onrestesynchronise}
\tilde{V}^{i,\varepsilon}_0(\tau^{\varepsilon}_1)+m\geq\theta\quad\forall\,i\in I.
\end{equation}
Let us denote 
\[
v(t)=\phi_t(V_r) = (V_r-\beta)\exp(-\gamma t)+\beta
\] 
the solution to the associated deterministic equation (i.e. \(\varepsilon = 0\))
starting from the reset potential $V_r$. 
We fix \(\delta\in(0,\min\{\beta-\theta,\theta-V_r\})\),
then there exist $0<t_1(\delta)<t_2(\delta)$ such that $v(t_1(\delta))+\delta=\theta$ and $v(t_2(\delta))-\delta=\theta$.   

Assume
\begin{equation}\label{TUBE}
v(t)-\delta< \tilde{V}^{i,\varepsilon}_0(t)< v(t)+\delta\quad\forall\,t\in[0,t_2(\delta)] \quad\forall\,i\in I,
\end{equation}
then $\tau^{\varepsilon}_1\in(t_1(\delta),t_2(\delta))$. Since $v(t_1(\delta))<v(\tau^{\varepsilon}_1)$, it follows that
\[
v(t_1(\delta))-\delta< \tilde{V}^{i,\varepsilon}_0(\tau^{\varepsilon}_1) \quad \forall\,i\in I,
\]
i.e.
\[
\tilde{V}^{i,\varepsilon}_0(\tau^{\varepsilon}_1)+m>\theta-2\delta + m \quad \forall\,i\in I.
\] 
Therefore, if \eqref{TUBE} is satisfied for \(\delta\leq m/2\), then 
all the neurons spike at time \(\tau^\varepsilon_1\) and so the network stays synchronized.
As the potential \(\tilde{V}^{i,\varepsilon}_0\)
are independent before \(\tau^\varepsilon_1\),
condition \eqref{TUBE} is fulfilled with a probability greater than
\[
(1-P_{\varepsilon}(\delta,t_2(\delta)))^N
\qquad\mbox{ where }\quad t_2(\delta)=\frac{1}{\gamma}\log\left(\frac{\beta}{\beta-\theta-\delta}\right),
\]
and the probability of synchronization satisfies
\[
\mathbb{P}(V^{i,\varepsilon}(\tau^{\varepsilon}_1)=0,\forall\,i\in I)\geq\left(1-P_{\varepsilon}\left(\frac{m}{2},t_2\left(\frac{m}{2}\right)\right)\right)^N.
\]
From Lemma~\ref{lemme_grandedevornstein} it follows that 
\[ 
\mathbb{P}(V^{i,\varepsilon}(\tau^{\varepsilon}_1)=0,\forall\,i\in I)\geq
\left(1-\exp\left(-\gamma\frac{m^2}{4\varepsilon}\right)\right)^N
\]
for all $\varepsilon$ small enough. 

\appendix
\section{Large Deviation Principle for
	Ornstein Uhlenbeck Process}
We  recall a result of 
Freidlin-Wentzell theory and apply it
to the particular case of Ornstein-Uhlenbeck process.

We consider a one dimensional deterministic function \((x(t),t\geq 0)\)
evolving according to
\begin{equation}
x(t) = x_{0}+\int_{0}^{t}b(x(s))ds,
\end{equation}
where \(b: \mathbb{R}\mapsto\mathbb{R}\) is a uniformly Lipschitz continuous function.  
The main question is related to the \textit{distance}
between \(x\) and its stochastic perturbation by 
an additive noise:
\begin{equation}
X^{\varepsilon}(t)  =  x_{0}+\int_{0}^{t}b(X^{\varepsilon}(s))ds+\sqrt{\varepsilon}\, W_{t}.
\end{equation}
Precisely, we aim to estimate
\[
\mathbb{P}\left(\sup_{0\leq t\leq T}\left|X^{\varepsilon}(t)-x(t)\right|\geq\delta\right),
\]
for \(T>0\) and \(\delta>0\) (fixed).
Answers to this classical question 
can be obtained thanks to Freidlin-Wentzell theory.
We introduce the functional set 
\[
H_1^{x_0}:=\left\{f:f(t)=x_0+\int_0^t\phi(s)ds,\phi\in L_2([0,T]), t\in[0,T]\right\}.
\]
\begin{theorem}
	The family \((X^\varepsilon(t),0\leq t\leq T)\)
	satisfies a large deviation principle 
	with good rate function \(I_{x_0}\)
	\begin{equation*}
	I_{x_0}(f)  =  
	\begin{cases}
	\dfrac{1}{2}\displaystyle\int_{0}^{T}\left|f^{\prime}(t)-b(f(t))\right|^{2}dt\quad\mbox{ for }f\in H_{1}^{x_0}\\
	+\infty\qquad\mbox{ otherwise.}
	\end{cases}
	\end{equation*}
\end{theorem}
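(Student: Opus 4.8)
The plan is to derive the statement from Schilder's theorem combined with the contraction principle, exploiting crucially that the noise enters \emph{additively}. Because $\sqrt{\varepsilon}\,W$ is added to the drift rather than multiplying it, the solution $X^{\varepsilon}$ is the image of the Brownian path under a \emph{deterministic}, continuous map; this is precisely the setting in which the contraction principle applies cleanly, with none of the pathwise approximation technicalities that multiplicative noise would force upon us.

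First I would record Schilder's theorem: in $C([0,T])$ with the uniform norm, the family $(\sqrt{\varepsilon}\,W_t,\,0\leq t\leq T)$ satisfies a large deviation principle with good rate function $J(g)=\tfrac12\int_0^T|g'(t)|^2\,dt$ for $g\in H_1^{0}$ and $J(g)=+\infty$ otherwise. Next I would introduce the solution map $F$ sending a driver $g\in C([0,T])$ with $g(0)=0$ to the unique solution of $f(t)=x_0+\int_0^t b(f(s))\,ds+g(t)$. For two drivers $g_1,g_2$ with solutions $f_1,f_2$, subtracting the integral equations and using that $b$ is $L$-Lipschitz gives
\[
|f_1(t)-f_2(t)|\leq L\int_0^t|f_1(s)-f_2(s)|\,ds+\|g_1-g_2\|_\infty,
\]
so Gronwall's lemma yields $\|f_1-f_2\|_\infty\leq e^{LT}\|g_1-g_2\|_\infty$. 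Hence $F$ is Lipschitz continuous; since the defining equation also exhibits its inverse explicitly through $g(t)=f(t)-x_0-\int_0^t b(f(s))\,ds$, the map $F$ is a homeomorphism between $\{g\in C([0,T]):g(0)=0\}$ and $\{f\in C([0,T]):f(0)=x_0\}$.

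Applying the contraction principle to $X^{\varepsilon}=F(\sqrt{\varepsilon}\,W)$ then produces a large deviation principle with good rate function $I_{x_0}(f)=\inf\{J(g):F(g)=f\}$. Because $F$ is injective, this infimum is attained at the single preimage given by the inverse formula; it satisfies $g(0)=0$ and, whenever $f$ is absolutely continuous, $g'(t)=f'(t)-b(f(t))$. Substituting into $J$ gives $I_{x_0}(f)=\tfrac12\int_0^T|f'(t)-b(f(t))|^2\,dt$ for $f\in H_1^{x_0}$; and if $f\notin H_1^{x_0}$, then either $f(0)\neq x_0$ or $f$ is not absolutely continuous, so no admissible preimage exists and the infimum over the empty set is $+\infty$, matching the claimed form. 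The step requiring the most care is the continuity of $F$, i.e.\ the Gronwall estimate above, together with the bijectivity that lets the variational infimum collapse to one explicit point; once these are secured, the identification of $I_{x_0}$ is a direct substitution. For the Ornstein--Uhlenbeck drift $b(x)=K-\gamma x$ appearing in Lemma~\ref{lemme_grandedevornstein} one has $L=\gamma$, so all hypotheses are satisfied and the theorem applies verbatim.
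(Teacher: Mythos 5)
Your proof is correct, and it takes a genuinely different route from the paper, which in fact offers no argument for this theorem at all: its ``proof'' is a pointer to the general Freidlin--Wentzell theory in \cite[Sect.~5.6]{dembozeitouni}. That general theory covers equations with state-dependent diffusion coefficients, for which the solution is \emph{not} a continuous functional of the driving Brownian path, and the proof there must therefore proceed through exponentially good approximations and an extended contraction principle. Your argument exploits exactly the feature that makes the present case elementary: with additive noise, $X^\varepsilon=F(\sqrt{\varepsilon}\,W)$ for a deterministic solution map $F$, which your Gronwall estimate shows to be Lipschitz on $C_0([0,T])$, so Schilder's theorem plus the bare contraction principle give the LDP with a good rate function in one stroke, and the injectivity of $F$ collapses the variational formula $I_{x_0}(f)=\inf\{J(g):F(g)=f\}$ to a single explicit substitution. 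What your route buys is a short, self-contained proof (with an explicit modulus $e^{\gamma T}$, and $L=\gamma$ for the drift of Lemma~\ref{lemme_grandedevornstein}); what the paper's citation buys is brevity and coverage of the multiplicative case it does not need.

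One point should be tightened in the identification of the rate function. Your dichotomy for $f\notin H_1^{x_0}$ (``either $f(0)\neq x_0$ or $f$ is not absolutely continuous'') omits the case where $f$ is absolutely continuous with $f(0)=x_0$ but $f'\notin L^2([0,T])$, and the phrase ``no admissible preimage exists'' is not quite accurate: whenever $f(0)=x_0$, the preimage $g(t)=f(t)-x_0-\int_0^t b(f(s))\,ds$ \emph{does} exist in $C_0([0,T])$; the preimage set is empty only when $f(0)\neq x_0$. The correct bookkeeping is that this unique $g$ belongs to $H_1^{0}$ if and only if $f\in H_1^{x_0}$: if $f$ is not absolutely continuous neither is $g$ (their difference is Lipschitz), and when both are absolutely continuous one has $g'=f'-b(f(\cdot))$ a.e.\ with $b(f(\cdot))$ bounded on $[0,T]$, so $g'\in L^2$ iff $f'\in L^2$. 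Hence $J(g)=+\infty$ for every $f\notin H_1^{x_0}$ with $f(0)=x_0$, and $I_{x_0}(f)=+\infty$ in all the excluded cases, as claimed. This is a slip of wording rather than of substance; with it repaired, your proof is complete.
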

The proof can be found e.g. in \cite[Sect. 5.6]{dembozeitouni}. We apply with the set 
\[
\Gamma=\left\{ f\in C_{0}[0,T],\,\sup_{0\leq t\leq T}\left|f(t)-x(t)\right|\geq\delta\right\}.
\]
The 
LDP writes
\begin{multline*}
-\inf_{f\in\overset{\circ}{\Gamma}}I_{x_0}(f)  \leq  \liminf_{\varepsilon}\varepsilon\log\mathbb{P}\left(\sup_{0\leq t\leq T}\left|X^{\varepsilon}(t)-x(t)\right|\geq\delta\right)\\
\limsup_{\varepsilon}\varepsilon\log\mathbb{P}\left(\sup_{0\leq t\leq T}\left|X^{\varepsilon}(t)-x(t)\right|\geq\delta\right) \leq -\inf_{f\in\bar{\Gamma}}I_{x_0}(f),
\end{multline*}
where \(\overset{\circ}{\Gamma}\) and \(\bar{\Gamma}\) denote   the interior and the closure of \(\Gamma\) respectively.
In our case the first and last quantities are equal, so it remains to evaluate
\[
\inf_{f\in\Gamma}I_{x_0}(f)
=\inf_{f\in\Gamma\cap H_1^{x_0}}\frac{1}{2}\int_{0}^{T}\left|f^{\prime}(t)-b(f(t))\right|^{2}dt.
\]
As $b(x)=-\gamma x + K$, we  find an explicit
expression for the previous quantity.

Denote $\varphi(t)=f(t)-x(t)$, we have
\[
\int_{0}^{T}\left|f^{\prime}(t)-b(f(t))\right|^{2}dt  = 
\int_{0}^{T}\left|\varphi^{\prime}(t)+\gamma \varphi(t))\right|^{2}dt.
\]

So, it remains to evaluate:
\[
\inf_{
	\varphi\in H_{1}^{0},\|\varphi\|_{\infty}\geq\delta
}
\int_{0}^{T}\left|
\varphi^{\prime}(t)+\gamma \varphi(t)\right|^{2}dt.
\]

Consider \(
\varphi\in H_{1}^{0},\|\varphi\|_{\infty}\geq\delta
\)
and introduce 
$T_{1}=\inf\left\{ t\in[0,T],|\varphi(t)|=\delta\right\}$. 
We define a modification \(\tilde{\varphi}\in H_{1}^{0}\) such that  
\(|\tilde{\varphi}(T_1)|=\delta\):
\[
\tilde{\varphi}(t) =
\begin{cases}
\varphi(t) \mbox{ for } t\in[0,T_1]\\
\delta \exp\left(-\gamma (t-T_1)\right) \mbox{ for } t\in [T_1,T].
\end{cases}
\]
We have
\[
\int_{0}^{T}\left|\varphi^{\prime}(t)+\gamma\varphi(t))\right|^{2}dt \geq \int_{0}^{T}\left|\tilde{\varphi}^{\prime}(t)+\gamma\tilde{\varphi}(t)\right|^{2}dt =
\int_{0}^{T_1}\left|\tilde{\varphi}^{\prime}(t)+\gamma\tilde{\varphi}(t)\right|^{2}dt.
\]
So, we have to evaluate
\[
\inf_{0\leq T_{1}\leq T}\inf_{\varphi\in H_{1}^{0},|\varphi(T_{1})|=\delta}\int_{0}^{T_{1}}\left|\varphi^{\prime}(t)+\gamma\varphi(t)\right|^{2}dt.
\]
We have
\begin{eqnarray*}
	\int_{0}^{T_{1}}\left|\varphi^{\prime}(t)+\gamma\varphi(t)\right|^{2}dt & = & \int_{0}^{T_{1}}\left(\varphi^{\prime}\right)^{2}(t)+\gamma^{2}\varphi^{2}(t)dt+\gamma\int_{0}^{T_{1}}2\varphi(t)\varphi^{\prime}(t)dt\\
	& = & \int_{0}^{T_{1}}\left(\varphi^{\prime}\right)^{2}(t)+\gamma^{2}\varphi^{2}(t)dt+\gamma\delta^{2}.
\end{eqnarray*}
Let 
\[
\Psi(\varphi)=\int_{0}^{T_{1}}\left(\varphi^{\prime}(t)\right)^{2}+\gamma^{2}\varphi^{2}(t)dt \quad\mbox{ for }\quad\varphi\in H_{1}^{0},|\varphi(T_{1})|=\delta.
\]
The infimum of \(\Psi\) is reached by 
\[
\varphi_{T_1}(t)=\delta\frac{\sinh\gamma t}{\sinh\gamma T_{1}} \quad \mbox{ for } t\in[0,T_1].
\]
since for all \(\Lambda\in H_1^0\) such that \(\Lambda(T_1)=0\), 
\begin{eqnarray*}
	\Psi(\varphi_{T_1} + \Lambda) & = & \int_{0}^{T_{1}}\left(\varphi_{T_1}^{\prime}(t)+\Lambda^{\prime}(t)\right)^{2}+\gamma^{2}\left(\varphi_{T_1}(t)+\Lambda(t)\right)^{2}dt\\
	& = & \Psi(\varphi_{T_1})+\Psi(\Lambda)+2\int_{0}^{T_{1}}\varphi_{T_1}^{\prime}(t)\Lambda^{\prime}(t)dt+2\gamma^{2}\int_{0}^{T_{1}}\varphi_{T_1}(t)\Lambda(t)dt\\
	& = & \Psi(\varphi_{T_1})+\Psi(\Lambda)+2\int_{0}^{T_{1}}\Lambda(t)(-\varphi_{T_1}^{\prime\prime}(t)+\gamma^{2}\varphi_{T_1}(t))dt\\
	& = & \Psi(\varphi_{T_1})+\Psi(\Lambda) \geq \Psi(\varphi_{T_1}).
\end{eqnarray*}
\begin{eqnarray*}
	\Psi(\varphi_{T_1}) & = & \frac{\delta^{2}\gamma^{2}}{\sinh^{2}\left(\gamma T_{1}\right)}\int_{0}^{T_{1}}\cosh^{2}\left(\gamma t\right)+\sinh^{2}\left(\gamma t\right)dt\\
	& = & \frac{\delta^{2}\gamma^{2}}{\sinh^{2}\left(\gamma T_{1}\right)}\int_{0}^{T_{1}}\cosh\left(2\gamma t\right)dt
	=  \delta^{2}\gamma\frac{\cosh\left(\gamma T_{1}\right)}{\sinh\left(\gamma T_{1}\right)}.
\end{eqnarray*}
Therefore, the function \(\Psi(\varphi_{T_1})\) is
a decreasing function of \(T_1\) and 
\begin{eqnarray*}
	\inf_{f\in\bar{\Gamma}}I(f) & = & \frac{1}{2}\left(\Psi\left(\varphi_{T}\right)+\gamma\delta^{2}\right)\\
	& = & \frac{\gamma\delta^{2}}{2}\left(1+\frac{\cosh\left(\gamma T\right)}{\sinh\left(\gamma T\right)}\right).
\end{eqnarray*}

\[
\lim_{\varepsilon\rightarrow0}\varepsilon\log\mathbb{P}\left(\sup_{0\leq t\leq T}\left|X^{\varepsilon}(t)-x(t)\right|\geq\delta\right)=-\frac{\gamma\delta^{2}}{2}\left(1+\frac{\cosh\left(\gamma T\right)}{\sinh\left(\gamma T\right)}\right)<-\gamma\delta^{2}.
\]

\section*{Acknowledgments}  
PG and ET have been supported by  Mathamsud Project 13MATH-04 - SIN. PG thanks University of Valparaiso project DIUV 45/2013.
ET has also been supported by the  Inria Equipe Associ\'ee Anestoc-Tosca
and by the European Union's Horizon 2020 Framework Programme for Research and Innovation under Grant Agreement No. 720270 (Human Brain Project SGA1).
ET is
grateful to Inria Sophia Antipolis - M\'editerran\'ee
``Nef'' computation cluster for providing resources and support.



\end{document}